\let\saveboldsymbol\boldsymbol
\let\boldsymbol\saveboldsymbol
\newtheorem{thm}{Theorem}[section]
\newtheorem{cor}[thm]{Corollary}
\newtheorem{lem}[thm]{Lemma}{\rm}
\newtheorem{rem}[thm]{Remark}
\newtheorem{ex}{Example}
\numberwithin{equation}{section}
\def\x{\mathbf{x}}
\def\R{\mathbb{R}}
\def\T{\mathbf{T}}
\def\N{\mathbb{N}}
\def\Q{\mathbf{Q}}
\def\M{\mathbf{M}}
\def\A{\mathbf{A}}
\def\B{\mathbf{B}}
\def\Y{\mathbf{Y}}
\def\X{\mathbf{X}}
\def\v{\mathbf{v}}
\def\y{\mathbf{y}}
\def\bx{\mathbf{x}}
\def\y{\mathbf{y}}
\def\a{\mathbf{a}}
\def\i{\mathbf{i}}
\def\j{\mathbf{j}}
\def\balpha{\boldsymbol{\alpha}}
\def\bbeta{\boldsymbol{\beta}}
\def\X{\mathbf{X}}
\def\dis{\displaystyle}
\def\Pb{\mathbb{P}}
\def\x{\mathbf{x}}
\def\v{\mathbf{v}}
\def\y{\mathbf{y}}
\def\a{\mathbf{a}}
\def\A{\mathbf{A}}
\def\M{\mathbf{M}}
\def\Q{\mathbf{Q}}
\def\R{\mathbb{R}}
\def\N{\mathbb{N}}
\def\bla{\boldsymbol{\lambda}}
\def\bom{\boldsymbol{\Omega}}
\def\dis{\displaystyle}
\definecolor{dkgreen}{rgb}{0,0.4,0}
\definecolor{dkred}{rgb}{0.7,0,0}
\newcommand{\app}{\hat f}
\DeclareMathOperator{\Argmin}{Argmin}
\thanks{The research of M. Korda and J . B. Lasserre research is partly supported by AI Interdisciplinary Institute ANITI funding, through the French ``Investing for the Future PIA3'' program under the Grant agreement n$^{\circ}$ANR-19-PI3A-0004. 
This research is also part of the programme DesCartes and is supported by the National Research Foundation, Prime Minister's Office, Singapore under its Campus for Research Excellence and Technological Enterprise (CREATE) programme and by the European Union under the project ROBOPROX (reg. no. CZ.02.01.01/00/22 008/0004590). J.B. Lasserre also acknowledges support from ANR-NuSCAP-20-CE48-0014.}
\begin{document}

\author{Didier Henrion$^{1,2}$, Milan Korda$^{1,2}$, Jean Bernard Lasserre$^{1,3}$}

\footnotetext[1]{CNRS; LAAS; Universit\'e de Toulouse, 7 avenue du colonel Roche, F-31400 Toulouse, France. }
\footnotetext[2]{Faculty of Electrical Engineering, Czech Technical University in Prague, Technick\'a 2, CZ-16626 Prague, Czechia.}
\footnotetext[3]{Institute of Mathematics; Universit\'e de Toulouse, 118 route de Narbonne, F-31062 Toulouse, France. }


\title[Polynomial argmin for recovery and approximation]{Polynomial argmin for recovery and approximation of multivariate discontinuous functions}

\begin{abstract}
We propose to approximate a (possibly discontinuous) multivariate function $f(\x)$ on a bounded set by the {partial minimizer} $\arg\min_{y} p(\x,y) $ of an
appropriate polynomial $p$ whose construction can be cast in a \emph{univariate} sum of squares (SOS) framework, resulting in a highly structured convex semidefinite program.
In a number of non-trivial cases (e.g. when $f$ is a piecewise polynomial)
we prove that the approximation is exact with
a low-degree polynomial $p$. Our approach has three distinguishing features: 
(i) It is mesh-free and does not require the knowledge of the discontinuity locations. 
(ii) It is model-free in the sense that we only assume that the function to be approximated is available through samples (point evaluations). (iii) The size of the semidefinite program is independent of the ambient dimension and depends linearly on the number of samples.
We also analyze the sample complexity of the approach, proving a generalization error bound in a probabilistic setting. This allows for a comparison with machine learning approaches. 
\end{abstract}

\maketitle

\section{Introduction}

Approximation of discontinuous functions in multiple dimensions is a notoriously difficult problem and a scientific  challenge. 
A common strategy (e.g. described in \cite{tadmor}) which works well in the univariate setting (and is implemented for example in the {\tt chebfun} package \cite{chebfun}) 
consists of the following steps :
1) detect the discontinuity locations and split the domain into a disjoint union of regions where the function is continuous, and 2) construct 
approximations of the continuous pieces on each region. However, in the multivariate case this strategy is very challenging to implement since the discontinuity set may have a positive dimension (see, e.g, \cite{bozzini} where an algorithm for detecting discontinuities in two dimensions is proposed). 

{For instance, in the
\emph{Variable Scaled Discontinuous Kernel} (VSDK) method described e.g. in
\cite{kernel}
the authors provide interesting numerical experiments 
but the method indeed assumes prior knowledge of the set of discontinuities\footnote{Indeed, in \cite[p. 442]{kernel}
the authors state
\emph{``The only drawback of the procedure lies in the fact that the algorithm needs to know where
the discontinuities occur".}}. In addition, the complexity of the approximant increases with the the sample size. 
On the other hand, \emph{Weigthed Essentially Non-Oscillatory} (WENO) methods as described in e.g. \cite{WENO} (and initially
designed for hyperbolic PDEs) do not assume prior knowledge of discontinuities.
However, such methods construct \emph{local} models based on an increasing number of samples (at every candidate input point of the algorithm for approximation) and so do not provide a global model of the discontinuous function to approximate.}
Numerical difficulties faced with approximating
multivariate functions are illustrated in the example sections of the paper.

A typical and important application is concerned with classification in data analysis and supervised 
learning, where powerful deep learning methods have obtained impressive results and success stories. However, such powerful methods still have some limitations (even for learning continuous functions, 
let alone discontinuous functions). 
Indeed for instance and quoting \cite{siam-news-2}, \emph{``
Despite many results that establish the existence of Neural Nets (NNs) with excellent approximation properties, 
algorithms that can compute these NNs only exist in specific cases."} That is, no training algorithm can obtain them in the general case.
For an interesting discussion about such limits (instability, accuracy, etc.) the interested reader is referred to 
\cite{siam-news-1,siam-news-2} and references therein. For learning discontinuous functions by neural networks, \cite{bernardo} proposes a tailored architecture; however this approach requires knowledge of discontinuity locations and is limited to univariate problems.

{This paper is a follow-up (but non trivial extension) of \cite{constructive-1}. We provide an alternative approximation technique aimed at dealing with such discontinuities and the Gibbs phenomenon, which are large oscillations of the approximation near the discontinuity points, see e.g. \cite[Chapter 9]{atap}. In particular, we show that our class of approximants can model exactly multivariate piecewise polynomial functions and can approximate with arbitrary accuracy other discontinuous functions.}

\subsection*{Contribution}
We introduce a new class of approximants for a possibly discontinuous function $f$ from $\X \subset \R^n$ to $\Y \subset \R$. We propose to approximate $f$ by
the {{\it polynomial argmin}}
\begin{equation}
\label{arg-intro}
\x\mapsto \app(\x)\,:={\min\{}\arg\min_{y\in \Y}p(\x,y)\}\,,\quad \x\in\X\,,
\end{equation}
{where $\Y\supset f(\X)$ and $p\in\R[\x,y]$ is a polynomial in $(\x,y)$.}

The main features of our approach can be summarized as follows.
\begin{itemize}
    \item The approach is mesh-free and does not require the knowledge of the discontinuities locations.
    \item 
    {It} is not limited to  univariate functions or tensor products thereof.
    \item 
    {It} is model-free, working only with the samples of the unknown function.
    \item A polynomial $p$ for our approximant \eqref{arg-intro} is constructed using  {a very specific class of convex optimization (semidefinite programming) problems whose size depends (linearly) on the number of samples and is \emph{independent} of the ambient dimension.}
    \item The approximant is simple to evaluate as it is the argmin of a \emph{univariate} polynomial.
    \item We provide a generalization error analysis in a probabilistic setting.
\end{itemize}

{In addition we also provide a result which is interesting in its own and justifies the use of the argmin approximation. Namely, we prove that any piecewise (possibly discontinuous) polynomial function $f:\X\to\R$ on a bounded set $\X\subset\R^d$ has an \emph{exact} ``$\arg\min$" representation. Specifically, provided that the partition that defines the regions of continuity of $f$ is of a certain kind, there exists a polynomial $p$ (not unique in general) such that
\[f(\x)\,=\,\displaystyle\arg\min_{y\in \R}\,\,p(\x,y)\]
for all $\x\in\X$, except for points of discontinuity of $f$. In other words, $\hat{f}$ in \eqref{arg-intro} coincides with~$f$ (except for discontinuity points). 
Moreover and importantly, when $f$ is known only from a sample of its values, such a polynomial $p$ can be retrieved by our algorithm, as $p$ belongs to its set of 
possible optimal outputs; see Remark \ref{rem:optimality}.}

We believe that these features make the approach a {unique and 
promising tool} with a wide variety of applications in data analysis. This is corroborated by a numerical evidence where we observe a remarkable performance on a range of examples. We also provide a solid theoretical underpinning of the method but leave some questions open, including the optimal rate of convergence of the argmin approximant.

{
\noindent{\bf Prior work and novelty.}
The idea behind our new approximant is 
a non-trivial extension of  our previous work \cite{constructive-1} 
that has provided an approximant which is the argument of the partial minimum (argmin), of a sum of squares (SOS) of polynomials,
in fact the reciprocal of the Christoffel function of a measure, ideally supported on the {graph} of the function to recover.
This recovery procedure can be seen as a non-standard application of the Christoffel-Darboux kernel.
Importantly, being in a class of functions much larger than polynomials,
such an approximant is able to approximate some discontinuous functions much better
than  polynomials can. 
Remarkably, in non-trivial examples, recovery is possible without oscillations and Gibbs phenomenon usually encountered 
in several more standard approaches. In this respect the reader is referred to 
the detailed discussion in \cite{poly-kernel} on kernel variants {(e.g F\'ejer or Jackson kernels) to attenuate} the Gibbs phenomenon encountered with polynomial approximations.}

While in \cite{burgers} our original motivation for introducing the polynomial argmin approximant was to recover the solution of a nonlinear partial differential equation from the knowledge of its approximate moments, this strategy was made rigorous and generalized to graph recovery from moments in \cite{constructive-1}. Then it was later extended to cope with partial moment information \cite{constructive-2}. Following our initial work, this argmin strategy was called 
``implicit model'' and used in robotics applications \cite{google} where the polynomial $p$ in  \eqref{arg-intro} is replaced by a continuous function computed by training, e.g. a neural network.

{A novelty and distinguishing feature of the present paper with respect to \cite{constructive-1} is that
the polynomial $p$ in \eqref{arg-intro} is \emph{not} restricted to be the reciprocal of the Christoffel function associated with the measure supported on the graph of $f$. Indeed, our set of potential candidate polynomials $p$
is now a {suitably chosen subspace of} $\R[\x,y]$, which is much larger than the set considered in \cite{constructive-1}. In addition, from a computational perspective, our method has a much more favorable behavior with respect to the ambient dimension. While 
in \cite{constructive-1} the size of the moment matrix whose Cholesky factorization is used to construct the Christoffel-Darboux kernel grows rapidly with respect to the ambient dimension, the size of the semidefinite programs (SDPs) solved in this work is independent of the dimension.}

\subsection*{Outline}

In the motivational Section \ref{sec:motivation} we show that our polynomial argmin strategy is already efficient in some non-trivial cases. For instance, it allows {\it exact recovery} when $f$ is a polynomial, or
an algebraic function, or
a (possibly discontinuous) piecewise polynomial. 

However, exact recovery by a polynomial argmin cannot be guaranteed in general,
and in Section \ref{sec:sos} we provide a numerical scheme to obtain the polynomial $p$ which appears in~\eqref{arg-intro}, 
when knowledge on $f$ is only though its finitely many values on a sample of points 
$(\x(i))_{i\in I}\subset\X$ {(and without any a priori knowledge on its distribution).} We reformulate our polynomial argmin strategy in the framework of univariate sum of squares (SOS) positivity certificates
so that finding $p$ amounts to solving a semidefinite optimization problem whose size is controlled by the degree  of {$p$ in $y$} and the sample size.

%
%
%
%
Finally, in Section \ref{sec:examples}
we also provide a set of numerical experiments  to evaluate 
{the efficiency of our proposed polynomial argmin strategy 
on a sample of problems.}  We observe that it
performs remarkably well to approximate challenging discontinuous functions already tested in  \cite{constructive-1}, 
as well as non trival two-dimensional examples of functions whose
set of discontinuities has positive dimension.
We have also compared with machine learning methods based on neural networks. {In all our experiments, the proposed method achieves far better accuracy with  simpler representation of the approximant.}

\section{{Exact representations}}\label{sec:motivation}

The purpose of this section is to demonstrate the expressive power of the polynomial argmin. We do so by showing that for several classes of known functions, an exact (and often simple) ``argmin" representation
is possible. Subsequently, in Section~\ref{sec:sos}, we show how a polynomial $\arg\min$ approximation can be found using convex optimization, provided that a collection of finite samples of values of $f$ is available. 

\subsection*{Notation and definitions}

Let $\R[\x,y]$ denote the ring of polynomials in the variables $\x\in\R^n$ and $y\in\R$,
and $\R[\x,y]_d$ its subset of polynomials of total degree at most $d$.
A polynomial $p$ is a sum of squares (SOS) if it can be written as $\sum_{k} p_k^2$
for finitely many polynomials $p_k$. The convex cone of all SOS polynomials of degree at most $d$ in the variable $\x$ is denoted by $\Sigma_d[\x]$.

\subsection{Representation of Polynomials}\label{sec:polynomials}

When $f \in \R[\bx]$ is a given polynomial, in~\eqref{arg-intro} choose  \[
(\x,y)\mapsto p(\bx,y):=  \frac{1}{2}y^2 - f(x)y\,,\quad\forall \x,y\,.
\]
Indeed, we observe that $\frac{d p}{dy} = y - f(x)$ and hence $y = f(x)$ is a stationary point. Since $p$ is strictly convex in $y$, it follows that $y = f(x)$ is the  global minimizer. The degree of $p$ in $x$ is equal to the degree of $f$ whereas the degree in $y$ is equal to two irrespective of $f$.

\subsection{Representation of Algebraic functions}

An {\it algebraic function} $f$ is such that $q_k(\bx,f(\bx))=0$ for some given polynomials $q_k \in \R[\bx,y]$, $k=1,\ldots,m$. 

If for each $\bx$, $(\bx,f(\bx))$ is the unique common zero of the $q_k$ then 
choose 
\[(\x,y)\mapsto p(\bx,y):=\sum_{k=1}^m q_k(\bx,y)^2\,,\quad\forall \x,y\,,\] and observe that the degree of $p$ in $\x$ and $y$ is twice the maximal degree of the $q_k$ in these variables.

Note that {\it semi-algebraic functions}\footnote{A semi-algebraic function is such that its graph is described by a finite union of a finite intersection of sets defined by polynomial equations and inequalities.} can also be modeled like that, provided that the inequalities are incorporated in the definitions of the domain $\X$ and image sets $Y$.

\begin{ex}\label{ex:abs}
The absolute value function can be expressed as
\[
|x|\:=\:\arg\min_{y \in Y} (x^2-y^2)^2
\]
with $\Y:=[0,1]$, for all $x \in \X:=[-1,1]$.
Note that a more complicated degree 8 polynomial argmin model for the absolute value function was already described in \cite[Example 2]{constructive-1}.
\end{ex}

 {
\subsection{Representation of piecewise constant functions}\label{sec:piecewise_const}

Let $\X \subset \R^n$ be bounded and $\Y = \R$. Given $N$ polynomials $g_1,\ldots,g_N \in \R[\x]$, define the cells 
\begin{equation}
    \label{model-1}
\X_i = \{\x \in \X \mid g_i(\x) \,<\, g_j(\x)\,, \:\forall j\neq i\},\quad i=1,\ldots,N\,,
\end{equation}
that partition $\X$. This model is motivated by the proof technique used and at the same time is sufficiently general to cover many situations encountered in practice, including box partitions and partitions where each cell is defined by a sublevel set of a single polynomial. We now briefly discuss these partitions and then detail the argmin representation of piecewice constant and piecewice polynomial functions defined thereon.}

Consider first $\X = [-1,1]$ split into $N$ intervals $\X_i = (a_i,a_{i+1})$, $i = 1,\ldots,N$, with $a_i < a_{i+1}$, $a_1 = -1$ and $a_{N+1} = 1$. Then $g_i(x) = (x-a_i)(x-a_{i+1})$ satisfies $g_i(x) < g_j(x)$ for all $j\ne i$ if and only if $x \in \X_i$.

This example is readily generalized to $\X = [-1,1]^n$ split into axis-aligned boxes $\X_\i = \bigtimes_{k=1}^n (a^{k}_{i_k}, a^{k}_{i_k+1})$, where $a_{k,1},\ldots,a_{k,m_k}$ (with $m_k\in\mathbb{N}$, $a_{k,1}=-1$, $a_{k,m_k}=1$) define the breakpoints of each coordinate axis $k\in \{1,\ldots,n\}$ and $\i = (i_1,\ldots,i_n)$ is a multi-index with $i_k \in \{1,\ldots,m_k\}$ . In that case, we can define $g_\i(\x) = \sum_{k=1}^n (x_k-a^k_{i_k})(x_k-a^k_{i_k+1})$. Then $g_\i(\x) < g_\j(\x)$ for all multi-indices $\j\ne \i$ if and only if $\x \in \X_\i$.

Finally, partitions of $\X$ into disjoint cells $\X_i$
of the form 
\[\X_i \,=\, \{\,\x \mid g_i(\x) < 0\,\}\,,\quad i=1,\ldots,N\,,\]
that is,
defined by the sublevel set of a single polynomial $g_i$,
can also be readily modeled using~(\ref{model-1}). Indeed, since $\X_i$'s are disjoint, we have $\x\in \X_i$ if and only if $g_i(\x) < 0$ and $g_j(\x) >0$ for all $j\ne i$; hence also $g_i(\x) < g_j(\x)$ for all $j\ne i$. Cartesian products of cells of this kind can be modeled using the same trick as described for boxes (i.e., cartesian products of intervals). 


Now we discuss the argmin representation of piecewise constant functions defined on the partition~(\ref{model-1}); generalization to piecewice polynomials is in \S \ref{sec:piece-poly}.  Given $N$ distinct\footnote{\label{foot:merge}{If the constants $c_i$ are not all distinct, then the cells associated with each subset of indices sharing the same value must be merged, thereby producing a new (coarser) partition with distinct values on each cell.}} real numbers $c_1,\ldots,c_N$, the piecewise constant function $f$ is defined by $f(\x) = c_i$ whenever $\x \in \X_i$. The value of $f$ at the boundary points of the partition, i.e., when $g_i(\x) = g_j(\x)$ for some $i\ne j$ is left arbitrary as in these regions one cannot hope for exact representation using the argmin.

\begin{thm}\label{thm:piecewise_cosnt}
    Let $f:\X\to \R$, $\X$ bounded, be a piecewise constant function satisfying $f(\x) = c_i$ for  $\x \in \X_i$ with $c_1,\ldots,c_N$ distinct. Then there exists $p\in\R[\bx,y]$ such that
\[f(\bx) = \displaystyle\arg\min_{y\in \R} p(\bx,y) \text{\;\;for all\;\;} \x \in \bigcup_{i=1}^N \X_i.
\]

\end{thm}
\begin{proof}
    Let \[
L_i(y)
=\prod_{\substack{1\le j\le n\\j\neq i}}
\frac{y-c_j}{\,c_i-c_j\,}
\]
be the degree-\((n-1)\) Lagrange  interpolation polynomial at \(c_i\). Define the degree-\((2n-1)\) Hermite–cardinal polynomial
\[
H_i(y) = 
\bigl(1 - 2\,L_i'(c_i)\,(y-c_i)\bigr)\;\bigl[L_i(y)\bigr]^2.
\]
This polynomial satisfies $H_i(c_j) = 1$ if $i=j$ and $H_i(c_j) = 0$ if $i\ne j$. Furthermore $H'_i(c_j)= 0$ for all $i,j$. Define 
\[
p(\x,y) = \sum_{i=1}^N g_i(\x) H_i(y) + M\prod_{i=1}^N (y-c_i)^2,
\]
where $M$ is a sufficiently large constant ensuring that $p(\x,y)$ is coercive in $y$ for every $\x \in \X$. Such constant exists since $\X$ is bounded. The coercive penalty ensures that for every $\x\in\X$, the global minimum of $p(\x,\cdot)$ is attained at a critical point of $p(\x,\cdot)$.

Now, since $H_i'(c_j) = 0$ for all $i,j$, we have $(\partial p/\partial y)(\x,y) = 0$ if $y \in \{c_1,\ldots, c_N\}$. Therefore, for each $\x$, the constants $c_1,\ldots,c_N$ are the only critical points of $p(\x,\cdot)$. The coercive penalty ensures that $c_1,\ldots,c_N$ are strict local minima for sufficiently large $M$. Furthermore, since $H_i(c_j) = 1$ if $i=j$ and zero  otherwise, we have $p(\x,c_i) = g_i(\x)$.  Therefore, among the local minima $c_1,\ldots,c_N$, the one with the lowest value of $g_i(\x)$ attains the smallest value of $p$ as desired. There are $N-1$ additional critical points $\xi_i$ of $p(\x,y)$, each lying in the interval $(c_i,c_{i+1})$ (assuming without loss that $c_i$'s are ordered). These additional critical points are simple roots of $(\partial p/\partial y)(\x,\cdot)$. Since $\xi_i\in (c_i,c_{i+1})$ and each $c_i$ is a strict local minimum, each $\xi_i$ must be a strict local maximum and hence not a candidate for a minimizer of $p(\x,\cdot)$. This concludes the proof.
\end{proof}

\subsection{Piecewise polynomial functions}
\label{sec:piece-poly}

{Theorem~\ref{thm:piecewise_cosnt} can be generalized to the case of piecewise polynomial functions defined over the same partition simply by replacing the constants $c_i$ by polynomials $h_i(\x)$. The only caveat is  that the constants $c_i$ appear in the denominator and hence the resulting function is rational. However, given that the argmin of a function is not changed when multiplied by a positive number, we can consider the polynomial 
\[
\tilde p(\x,y) =
 p(\x,\y) \prod_{\substack{1\le i,j\le n\\j\neq i}}
(h_i(\x)-h_j(\x))^4,
\]
where all polynomials appearing in the denominator of $p$ are canceled. Therefore $\tilde p$ is a polynomial in $(\x,y)$. Hence we arrive at the following corollary:

\begin{cor}\label{cor:piecewise_cosnt}
    Let $\X\subset\R^n$ be bounded, and let $\X_i$ be as in \eqref{model-1}, 
    $i=1,\ldots,N$. Let $f:\X\to \R$ be a piecewise polynomial function satisfying $f(\x) = h_i(\x)$ for  $\x \in \X_i$ with $h_1,\ldots,h_N \in \R[\x]$. Then there exists $\tilde p\in\R[\bx,y]$ such that
\[f(\bx) = \displaystyle\arg\min_{y\in \R} \tilde p(\bx,y) \text{\;\;for all\;\;} \x \in \bigcup_{i=1}^N \X_i \setminus \A, 
\]
where $\A = \{\x \mid h_i(\x) = h_j(\x)\; \text{for some\;} i\ne j\}$.
\end{cor}
We note that the removal of the points $\x$ where $h_i(\x) = h_j(\x)$ cannot be easily avoided since for those $\x$, the polynomial $\tilde p$ is identically zero. However, the Lebesgue measure of $\A$ is zero, unless $h_i(\x) = h_j(\x)$ for all $\x$, in which case cell merging can be carried out (see Footnote~\ref{foot:merge}).

The construction via the proof of Theorem \ref{thm:piecewise_cosnt} (and hence of Corollary \ref
{cor:piecewise_cosnt})
is not unique. For instance in the case of two pieces, let $g, p_1, p_2 \in \R[\bx]$ and consider the piecewise polynomial function
\begin{equation}
\label{model-2}
\bx\mapsto f(\bx) := \left\{\begin{array}{lcl}
p_1(\bx) & \text{if} & g(\bx) \in (0,1) \\
p_2(\bx) & \text{if} & g(\bx) \in (-1,0)
\end{array}\right.\,,\quad \bx\in\X\,,
\end{equation}
where $\X:=\{\x:g(\x) \in (-1,0)\}\cup\{\x: g(\x) \in (0,1)\}$.

\begin{lem}\label{pwpoly}
Let $f$ and $\X$ be as in \eqref{model-2}. Then there exists $p\in\R[\bx,y]$ such that
$f(\bx)=\displaystyle\arg\min_{y\in \Y} p(\bx,y)$ for all $\bx\in \X$, with $\Y=\R$.
\end{lem}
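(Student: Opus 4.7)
The plan is to exhibit an explicit polynomial $p(\bx, y)$, of degree four in $y$, whose three critical points in $y$ are $p_1(\bx)$, $p_2(\bx)$ and a third polynomial $m(\bx)$ that sits strictly between the first two. The quartic shape then forces $p_1$ and $p_2$ to be the two local minima, and the global minimum will be the one corresponding to $f(\bx)$, provided $m$ is tuned using $g$ so that the sign of the value gap $p(\bx,p_2) - p(\bx,p_1)$ matches the sign of $g(\bx)$.

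Concretely, I would define
\[
m(\bx) \;:=\; \frac{1 - g(\bx)}{2}\,p_1(\bx) \;+\; \frac{1 + g(\bx)}{2}\,p_2(\bx),
\]
which is a polynomial in $\bx$ and, since $|g(\bx)| < 1$ on $\X$, a strict convex combination of $p_1(\bx)$ and $p_2(\bx)$. In particular $m(\bx)$ lies strictly between them whenever $p_1(\bx)\neq p_2(\bx)$. Then set
\[
p(\bx,y) \;:=\; \int_0^y \bigl(s - p_1(\bx)\bigr)\bigl(s - p_2(\bx)\bigr)\bigl(s - m(\bx)\bigr)\,ds,
\]
a polynomial in $(\bx,y)$ of degree $4$ in $y$ with leading coefficient $1/4$. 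By construction $\partial_y p(\bx,\cdot)$ has exactly the roots $p_1(\bx), p_2(\bx), m(\bx)$, so these are the only critical points, and the fact that $p(\bx,\cdot)\to+\infty$ at $\pm\infty$ guarantees that the global minimum is attained at one of them.

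To identify the global minimum I would compute the value gap directly via the fundamental theorem of calculus. When $p_1(\bx) \neq p_2(\bx)$, the substitution $s = p_1(\bx) + t(p_2(\bx) - p_1(\bx))$ reduces the integral to
\[
p(\bx,p_2(\bx)) - p(\bx,p_1(\bx)) \;=\; \frac{(p_2(\bx) - p_1(\bx))^{3}\,\bigl(2m(\bx) - p_1(\bx) - p_2(\bx)\bigr)}{12} \;=\; \frac{g(\bx)\,(p_2(\bx) - p_1(\bx))^{4}}{12},
\]
where the last equality uses the identity $2m - p_1 - p_2 = g(p_2 - p_1)$ built into the definition of $m$. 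Combined with the elementary observation that for a quartic with positive leading coefficient and three distinct critical points the middle one is a local maximum while the outer two are local minima, this shows that the global minimizer is $p_1(\bx)$ when $g(\bx) > 0$ and $p_2(\bx)$ when $g(\bx) < 0$, matching $f(\bx)$ on all of $\X$. The degenerate case $p_1(\bx) = p_2(\bx)$ is trivial: then $m(\bx)$ equals their common value, $\partial_y p = (y - p_1(\bx))^3$, and $f(\bx) = p_1(\bx)$ is the unique critical point, hence the unique minimizer. The only potentially nontrivial step is the integral identity above, which is a routine one-variable polynomial computation; the rest of the argument is structural and relies crucially on the bound $|g|<1$ on $\X$, which ensures that $m$ really is the \emph{middle} root of $\partial_y p$.
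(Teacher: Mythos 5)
Your proof is correct. The value-gap identity is right: with $s=p_1+t(p_2-p_1)$ one gets $\int_0^1 t(t-1)(t-c)\,dt=\tfrac{2c-1}{12}$ with $c=\tfrac{1+g}{2}$, hence $p(\bx,p_2)-p(\bx,p_1)=\tfrac{g(\bx)(p_2(\bx)-p_1(\bx))^4}{12}$, and together with the standard min--max--min pattern of a coercive quartic with three distinct critical points this pins down the global minimizer as $p_1$ for $g\in(0,1)$ and $p_2$ for $g\in(-1,0)$; the degenerate case $p_1=p_2$ is handled correctly as well. The strategy is the same as the paper's --- a quartic in $y$ whose $y$-derivative factors as $(y-p_1)(y-p_2)$ times a linear term whose root is a $g$-dependent affine combination of $p_1$ and $p_2$ --- but your execution differs in a way that genuinely simplifies the argument. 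The paper's third critical point is $p_3=\tfrac{1-3g}{2}p_1+\tfrac{1+3g}{2}p_2$, which escapes the segment between $p_1$ and $p_2$ as soon as $|g|>1/3$; this is why the paper must classify the nature of all three critical points and compare three pairwise value differences over six subintervals of $g$ (Table~\ref{tab:signs}). Your choice $m=\tfrac{1-g}{2}p_1+\tfrac{1+g}{2}p_2$ is a strict convex combination for all $|g|<1$, so $m$ is always the middle root, $p_1$ and $p_2$ are always the two local minima, and a single sign evaluation of one value gap finishes the proof with no case analysis. What the paper's version buys in exchange is only a slightly more aggressive tilt of the quartic (and explicit closed forms for the Hessians); your variant proves the same statement, with the same degree of $p$ in $y$, more economically.
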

\begin{proof}
With $r\in\R[\bx]$ and $q\in\R[\bx,y]$, let
$p(\bx,y) := (y-p_1(\bx))^2(y-p_2(\bx))^2+r(\bx)q(\bx,y)$ be such that $\partial q(\bx,y)/\partial y=6(y-p_1(\bx))(y-p_2(\bx))$.
For instance $q(\bx,y) :=  2y^3-3(p_1(\bx)+p_2(\bx))y^2+6p_1(\bx)p_2(\bx)y$.

Now observe that for each given $\bx \in \X$, $y \mapsto p(\x,y)$ is a coercive quartic univariate polynomial, and hence it has at most two local minima. 
Letting $r(\bx):=g(\bx)\,(p_1(\bx)-p_2(\bx))$,
the gradient $\partial p(\x,y)/\partial y$ vanishes at the critical points $p_1(\x)$ resp. $p_2(\x)$ resp. $p_3(\x):=(p_1(\x)(1-3g(\x))+p_2(\x)(1+3g(\x))/2$. At the critical points, the Hessian $\partial^2 p(\x,y)/\partial y^2$ is equal to $2(p_1(\x)-p_2(\x))^2(1+3g(\x))$ resp.
$2(p_1(\x)-p_2(\x))^2(1-3g(\x))$ resp. $(p_1(\x)-p_2(\x))^2(-1+3g(\x))(1+3g(\x))$. 

To compare the values at the critical points, we evaluate the differences $p(\x,p_2(\x))-p(\x,p_1(\x))=g(\x)(p_1(\x)-p_2(\x))^4$, $p(\x,p_3(\x))-p(\x,p_2(\x))=(1+g(\x))(1-3g(\x))^3(p_1(\x)-p_2(\x))^4/16$, $p(\x,p_3(\x))-p(\x,p_1(\x))=(-1+g(\x))(1+3g(\x))^3(p_1(\x)-p_2(\x))^4/16$. The proof follows by evaluating the signs of the Hessian and the differences at the critical points for $g(\x)$ within the intervals $(-\infty,-1)$, $(-1,-1/3)$, $(-1/3,0)$, $(0,1/3)$, $(1/3,1)$, $(1,\infty)$, see Table \ref{tab:signs}. 
\end{proof}
}

\begin{table}[ht]
    \centering
    \begin{tabular}{c|cccccc}
        & $(-\infty,-1)$ & $(-1,-1/3)$ & $(-1/3,0)$ & $(0,1/3)$ & $(1/3,1)$ & $(1,\infty)$ \\ \hline
        $p_1$ & max & max & min & min & min & min \\
        $p_2$ & min & min & max & max & max & max \\
        $p_3$ & min & min & max & max & min & min \\ \hline
        argmin & $p_3$ & $p_2$ & $p_2$ & $p_1$ & $p_1$ & $p_3$   \end{tabular}
    \caption{Nature of critical points for different intervals of variation of $g(\x)$.
    \label{tab:signs}}
\end{table}

\begin{figure}[ht]
    \centering
    \includegraphics[width=0.6\textwidth]{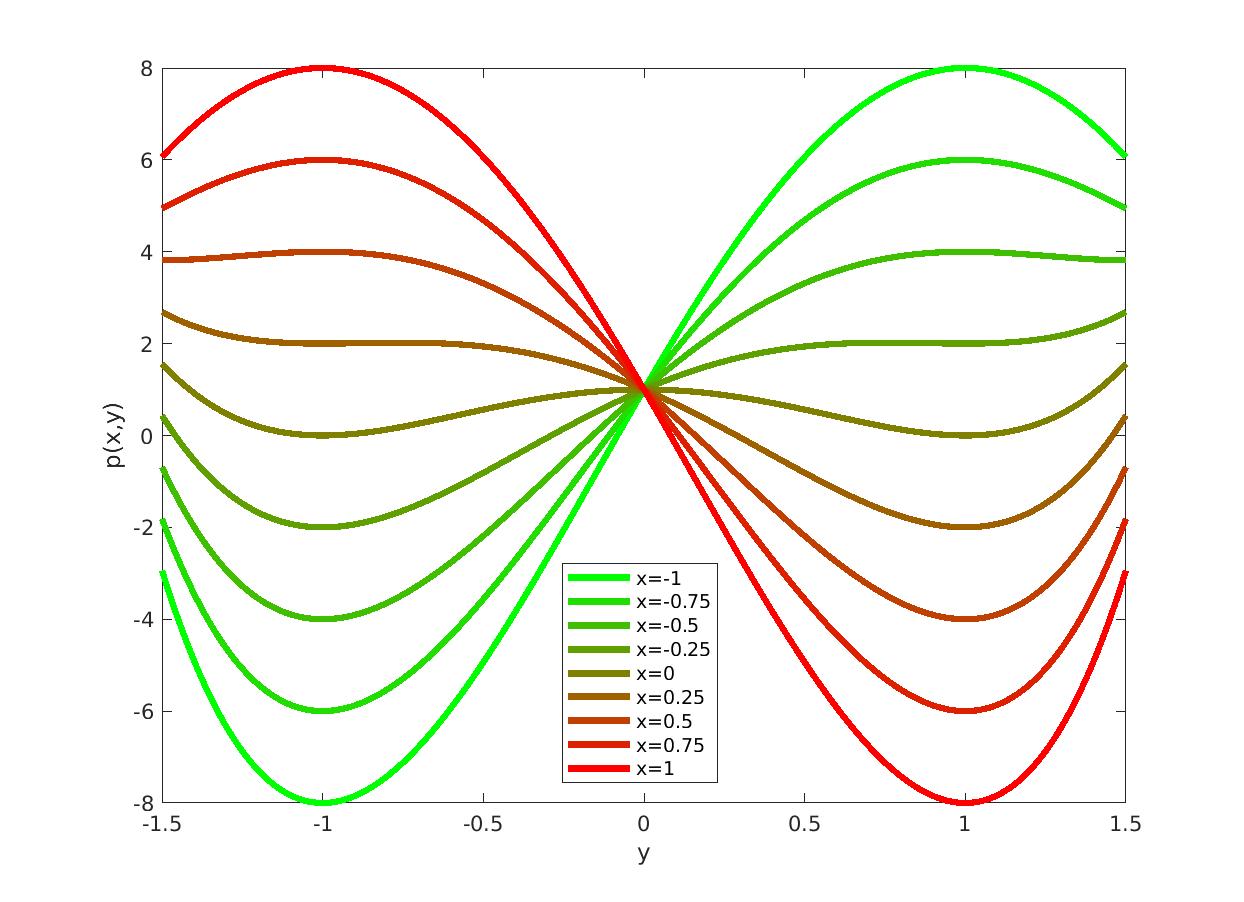}
    \caption{Degree 4 polynomial $p(x,y)$ whose argmin w.r.t. $y$ models the sign function. Represented are univariate polynomials $y \mapsto p(x,y)$ for various given values of $x$. We observe that for positive values of $x$, the argmin is precisely $+1$ whereas for negative values of $x$, the argmin is precisely $-1$ as required in order to represent the function $\mathrm{sign}(x)$.
    \label{fig:argminsign}}
\end{figure}

\begin{ex}\label{ex:sign}
In \cite[Example 1]{constructive-1} a degree 8 polynomial argmin model was described for the sign function $f(x)$ which is equal to $-1$ if $x\in[-1,0)$ and $+1$ if $x\in(0,1]$. A simpler representation is obtained via Lemma \ref{pwpoly} with $p(x,y)=(y+1)^2(y-1)^2+4xy(y^2-3)$. Here we consider $\Y = \R$ and $\X = [-1,1]$.  Figure~\ref{fig:argminsign} depicts the polynomial $p(x,y)$ for different values of $x$.


An even simpler argmin model of the sign function on $\X=[-1,1]$ is $p(x,y)=-xy$ with $\Y=[-1,1]$. Similarly to Example \ref{ex:abs}, the choice of domain and image sets $\X, \Y$ plays here a key role.
\end{ex}

\begin{ex}\label{ex:disk} The indicator function of the bivariate unit disk {$\{\x \in \R^2 : x^2_1+x^2_2\leq 1\}$} can be modeled exactly on {$\X:=\{\x \in \R^2 : x^2_1+x^2_2\leq 2\}$} via Lemma \ref{pwpoly}, as the degree 5 polynomial argmin of $p(\bx,y)=y^2((y-1)^2+(1-x^2_1-x^2_2)(3-2y))$
\end{ex}
{
\begin{ex}
    \label{ex-relu}
    The univariate function
	\[
	x\mapsto f(x)\;=\;\max\{x,0\}\,,\quad x\,\in\,(-1,1)\,,
	\]
	often called the \emph{rectified linear unit} or ReLU, is widely used in optimization and machine learning.  One may ask whether there is a \emph{polynomial} \(p(x,y)\) for which \(f(x)\) arises as
	\(\arg\min_y p(x,y)\). 

By Lemma \ref{pwpoly}, and with $p_1(x)=x$, $p_2(x)=0$, $g(x)=x$, one can check that $p(x,y) = y^2((y-x)^2+(2y-3x)x^2)$ is such that $\arg\min_{y \in \mathbb R} p(x,y) = \max(0,x)$ for all $x \in (-1,1)$.
\end{ex}
}

\section{{Sample-based formulation}}\label{sec:sos}

{Before describing our numerical scheme we first state a classical result 
on certificate of non-negativity in real algebraic geometry, 
which plays a key role in our approach.}

\subsection{Nonnegativity of univariate polynomials on an interval}
The following result whose second part is due to F. Luk\'acs is classical (see, e.g., \cite[p. 4681]{powers} for a discussion).

{\begin{thm}\label{thm:unisos}
A univariate polynomial $q \in \R[y]$ of degree $d$ is nonnegative on $\mathbb{R}$ if and only if $q\in \Sigma_d[y]$.
A univariate polynomial $q \in \R[y]$ of degree $d$ is nonnegative on the interval $[a,b] \subset \R$ if and only if
\[
\begin{cases}
q = \sigma_0 + \sigma_1(b-y)(y-a),\quad \sigma_0\in \Sigma_d[y], \;\;\;\;\,\sigma_1 \in \Sigma_{d-2}[y] & d \mathrm{\;\, even,} \\
q = \sigma_0(y-a) + \sigma_1(b-y),\quad \sigma_0\in \Sigma_{d-1}[y], \;\sigma_1 \in \Sigma_{d-1}[y] & d \mathrm{\;\, odd.}
\end{cases}
\]
\end{thm}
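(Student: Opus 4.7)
The plan is to prove this classical Lukács--Markov result by factoring $q$ over $\mathbb{C}$ and then expressing each offending linear factor as a nonnegative combination of $(b-y)$ and $(y-a)$. First I would write $q(y) = c \cdot Q_1(y) \cdot Q_2(y)$ where $c\neq 0$ is the leading coefficient, $Q_1$ collects all complex conjugate pair factors $(y-\alpha)(y-\bar\alpha) = (y-\mathrm{Re}(\alpha))^2 + (\mathrm{Im}(\alpha))^2$ together with all real factors from roots in the open interval $(a,b)$ (these must have even multiplicity, else $q$ would change sign on $[a,b]$), and $Q_2$ is the product of the remaining linear factors $(y-r_i)$ whose roots satisfy $r_i\le a$ or $r_i\ge b$. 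By the Brahmagupta--Fibonacci identity, products of sums of two squares are sums of two squares, so $Q_1$ is itself a sum of two squares. It then suffices to put $c\cdot Q_2$ into Lukács form, since multiplying a Lukács-form polynomial by a sum of squares preserves this form (using that $[(b-y)(y-a)]^2$ is a square that can be absorbed into the $\sigma_0$ part).

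The core input is the pair of P\'olya-type identities
$$
s-y=\frac{s-a}{b-a}(b-y)+\frac{s-b}{b-a}(y-a)\ (s\ge b),\qquad y-s=\frac{a-s}{b-a}(b-y)+\frac{b-s}{b-a}(y-a)\ (s\le a),
$$
both with nonnegative coefficients on the interval $[a,b]$. A parity count, after reducing to $c>0$ via the reflection $y\mapsto a+b-y$ if necessary (which merely swaps the roles of $(b-y)$ and $(y-a)$ in the target form), shows that the number $n_+$ of roots with $r_i\ge b$ is even, and the number $n_-$ of roots with $r_i\le a$ has the same parity as $d$.

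For $d$ even, $n_-$ is also even, so I pair up outside factors within each side. Each pair $(s_1-y)(s_2-y)$ or $(y-s_1)(y-s_2)$ becomes, via the identities above, a nonnegative combination of $(b-y)^2$, $(y-a)^2$, and $(b-y)(y-a)$, hence has the form $\tau_0(y)+\tau_1(b-y)(y-a)$ with $\tau_0\in\Sigma_2[y]$ and $\tau_1\ge 0$. Multiplying all such pair-forms, each power $[(b-y)(y-a)]^k$ yields a square for $k$ even and a square times $(b-y)(y-a)$ for $k$ odd, producing the required form $\sigma_0+\sigma_1(b-y)(y-a)$. For $d$ odd I would split off a single extra linear factor $(y-s_*)$ with $s_*\le a$ (guaranteed since $n_-$ is odd), apply the even-case decomposition to the remaining $q/(y-s_*)$, then distribute via $(y-s_*)=\frac{b-s_*}{b-a}(y-a)+\frac{a-s_*}{b-a}(b-y)$; regrouping the resulting terms collects sum-of-squares multipliers in front of $(y-a)$ and $(b-y)$, yielding $q=\sigma_0(y-a)+\sigma_1(b-y)$.

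The main obstacle I anticipate is the careful tracking of the degree bounds $\deg\sigma_0\le d$, $\deg\sigma_1\le d-2$ (even case) and $\deg\sigma_0,\deg\sigma_1\le d-1$ (odd case), since each absorption step can raise the degree of the intermediate SOS summands and one must verify that the final bounds are tight after all pair-forms and the odd-case leftover factor have been multiplied out and regrouped.
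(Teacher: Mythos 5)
The paper gives no proof of Theorem~\ref{thm:unisos}: it is quoted as a classical result of Luk\'acs with a pointer to the survey of Powers and Reznick, so there is no internal argument to compare yours against. On its own terms, your sketch follows the standard real-root-factorization proof and the strategy is sound: collect the conjugate quadratic factors and the (necessarily even-multiplicity) interior real roots into a sum of two squares via the two-squares identity, and write each remaining linear factor, whose root lies in $(-\infty,a]\cup[b,\infty)$, as a nonnegative combination of $(b-y)$ and $(y-a)$ on $[a,b]$. The odd-degree half of your argument (reflect to make $c>0$, split off one factor with root $\le a$, distribute) goes through, since the reflection $y\mapsto a+b-y$ multiplies the leading coefficient by $(-1)^d$ and hence does flip its sign when $d$ is odd.

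For the same reason, however, that reflection does \emph{not} normalize $c>0$ when $d$ is even, and the case $c<0$ genuinely occurs: for $q(y)=4-y^2$ on $[-1,1]$ one has $c=-1$ and $n_+=n_-=1$, so both of your parity claims ($n_+$ even, $n_-\equiv d$) fail and the ``pair up within each side'' step has nothing to pair. The correct sign relation, obtained by evaluating the outside factors at an interior point, is $c(-1)^{n_+}>0$; with it you must also allow cross-pairs $(s_1-y)(y-s_2)$ with $s_1\ge b$, $s_2\le a$, which again land in the form $\tau_0+\tau_1(b-y)(y-a)$. A cleaner repair, which also settles the degree bookkeeping you flag as the main worry, is to absorb a sign into every factor with root $\ge b$, writing $q=c(-1)^{n_+}\,Q_1\prod_{k=1}^{m}\ell_k$ with $m=n_++n_-$ and each $\ell_k$ a nonnegative combination of $(b-y)$ and $(y-a)$. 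Expanding $\prod_k\ell_k$ yields nonnegative multiples of $(b-y)^{j}(y-a)^{m-j}$; when $m$ is even this monomial is either a perfect square of degree $m$ (both exponents even) or $(b-y)(y-a)$ times a square of degree $m-2$ (both odd), and when $m$ is odd it is $(y-a)$ or $(b-y)$ times a square of degree $m-1$. Multiplying through by the SOS factor $c(-1)^{n_+}Q_1$ of degree $d-m$ then gives exactly the stated forms with $\deg\sigma_0\le d$, $\deg\sigma_1\le d-2$ in the even case and $\deg\sigma_0,\deg\sigma_1\le d-1$ in the odd case, with no further absorption steps needed.
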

}
It is a simple observation that a polynomial $\sigma$ belongs to $\Sigma_d$ (for $d$ even) if and only if there exists a positive semi-definite matrix $W\succeq 0$ such that $\sigma(y) = v_{d/2}(y) W v_{d/2}(y) $, where $v_{d/2}$ is a basis of $\R[y]_{d/2}$, e.g., $v_{d/2} = [1,y,y^2,\ldots,y^{d/2}]$. Therefore the nonnegativity of $q \in \R[y]$ on $\mathbb{R}$ or $[a,b]$ is \emph{equivalent} to the feasibility of a semidefinite programming (SDP) problem\footnote{Semidefinite Programming (SDP) is a (convex) conic optimization problem on the space of positive
semidefinite matrices, a generalization of Linear Programming; several efficient software packages exist (e.g. MOSEK \cite{MOSEK} or SeDuMi \cite{SeDuMi}), and for more details, the interested reader is referred to e.g. \cite{sdp-handbook} and references therein.}.
Observing that the conditions in Theorem~\ref{thm:unisos} are affine in the coefficients of $q$, we conclude that one can also \emph{optimize} over the set of polynomials (of fixed degree) nonnegative over $[a,b]$ using { Semidefinite Programming; see e.g. \cite{lasserre01,acta}.}

\subsection{A numerical scheme}

Given a function $f:\R^n\to\Y$ with $\Y= [a,b] \subset \R$ sampled at points $(\x_i,y_i)_{i=1}^N$ with $y_i = f(\x_i)$, we wish to construct an approximation $\hat{f}_d$ of the form
\begin{equation}
\label{approx}
\x\mapsto \app(\x)\,:=\,\arg\min_{y\in \Y }p(\x,y),
\end{equation}
\noindent where $p\in\R[\x,y]$ is a polynomial to be determined. {The set $\mathbf{Y}\subset\mathbb{R}$ serves as a priori information on the range of $f$. If no such information is available, we set $\mathbf{Y} = \mathbb{R}$ (see Remark~\ref{rem:range} for more details).}

Throughout this section we assume that the argmin is unique. If this is not the case, a tiebreaker rule has to be applied (e.g., one can consider the min of the argmin). {Since  monomials not containing $y$ do not influence the argmin}, we use the parametrization of $p$ as
\begin{eqnarray}\label{eq:hkparam}
(\x,y)\mapsto p(\x,y) \,:=\, \sum_{k=1}^{d_y} h_k(\x)y^k\,,\quad\forall \x,y\,,
\end{eqnarray}
where $h_k \in \R[\x]_{d_x}$ are polynomials of total degree at most $d_x$ to be determined. In order to find $h_k$, we propose to solve the following convex optimization problem parametrized by $d_x,d_y,d_\gamma \in \mathbb{N}$ and $\alpha > 0$:

\begin{equation} \label{opt:pos_data}
\begin{aligned}
 \min\limits_{\gamma \in \R[\x,y]_{d_\gamma},\, (h_k \in \R[\x]_{d_x})_{k=1}^{d_y}} \quad & \int_{\X\times [a,b]}\gamma(x,y)\,d\x dy \\
\textrm{s.t.} \quad &   p(\x_i,y) - p(\x_i,y_i) + \gamma(\x_i,y_i) - \alpha(y-y_i)^2  \ge 0 \,, \forall y\in [a,b],\;\; i=1,\ldots,N\\
   \quad & \gamma(\x_i,y_i) \ge 0\,,\quad i=1,\ldots,N,
\end{aligned}
\end{equation}
where $p$ is parametrized using $h_k$ as in~(\ref{eq:hkparam}). The rationale behind~(\ref{opt:pos_data}) is as follows: If $\gamma(\x_i,y_i) = 0$, then $p(\x_i,y) - p(\x_i,y_i) \ge  \alpha(y-y_i)^2$ for all $y \in \Y = [a,b]$ and hence $y_i = \arg\min_{y\in\Y} p(\x_i,y)$, which means $\app(\x_i) = y_i$. Therefore, if $\gamma(\x_i,y_i) = 0$ for all $i=1,\ldots,N$ we get an exact interpolation of all data points. This, however, cannot be achieved in general in which case $\gamma(\x_i,y_i) > 0$ for some $i$; the polynomial $\gamma$ therefore acts as a slack variable and is minimized in the objective function. An alternative to using a polynomial slack variable is to assign one slack variable $\gamma_i\in \R_+$ to each of the constraints; here we chose to use the polynomial slack variable in order to make the number of decision variables of~(\ref{opt:pos_data}) independent of the number of samples $N$, which facilitates the analysis of the generalization error in Section~\ref{sec:generalization}. 

We also note that if the objective function cannot be evaluated in closed form (e.g., if the moments of the Lebesgue measure over $\X\times [a,b]$ are not known or too costly to compute), the integral can be replaced by an approximation computed from the available data
\begin{equation}\label{eq:emp_average}
\frac{1}{N}\sum_{i=1}^N \gamma(\x_i,y_i). 
\end{equation}

The optimization problem~\eqref{opt:pos_data} translates to 
{a semidefinite program (SDP)} by equivalently reformulating the first constraint using Theorem~\ref{thm:unisos}, {as is done 
in the Moment-SOS hierarchy of convex relaxations for polynomial optimization. For more details the interested reader is referred to \cite{lasserre01,acta}.}

For brevity, we state it explicitly only for $d_y$ even (the difference for $d_y$ odd is the same as in Theorem~\ref{thm:unisos}):
\begin{equation} \label{opt:sos}
\begin{aligned}
 \min\limits_{\gamma \in \R[\x,y]_{d_\gamma},\, (h_k\in\R[\x]_{d_x})_{k=1}^{d_y},\, (\sigma_{0,i}, \, \sigma_{1,i})_{i=1}^N } \quad & \int_{\X\times [a,b]}\gamma(\x,y)\,d\x dy \\
 \quad &   \hspace{-3.5cm}p(\x_i,y) - p(\x_i,y_i) + \gamma(\x_i,y_i) - \alpha(y-y_i)^2 = \sigma_{0,i} + \sigma_{1,i}(b-y)(y-a),\;\; i=1,\ldots,N\\
   \quad & \hspace{-3.5cm} \sigma_{0,i} \in \Sigma_{d_y}[y], \sigma_{1,i} \in \Sigma_{d_y-2}[y]\\
   \quad & \hspace{-3.5cm}\gamma(\x_i,y_i) \ge 0,\quad i=1,\ldots,N.
\end{aligned}
\end{equation}

{Formulation (\ref{opt:sos}) readily translates to an SDP 
and can be solved using off-the-shelf solvers such as MOSEK \cite{MOSEK} or SeDuMi \cite{SeDuMi}.}

{
\begin{rem}[Simplified complexity analysis]
In  SOS problem \eqref{opt:sos}, if we neglect the number of free variables parametrizing $h_k$ and $\gamma$ (polynomials of degrees typically much smaller than $N$), we have $2N$ positive semidefinite matrices of size at most $d_y/2$ and satisfying $Nd_y$ equality constraints. In this simplified setup, according to \cite[Section 6.6.3]{bn01}, the number of Newton steps of an interior-point algorithm for finding an $\varepsilon$-solution of SOS problem \eqref{opt:sos} is of the order of $\log(1/\varepsilon)\sqrt{N}d_y$, and each Newton step has a complexity of the order of $N^3d^4_y+N^3d^3_y+N^2d^4_y$.
\end{rem}}

{
\begin{rem}[Range of $f$]\label{rem:range}
If no information on the range of $f$ is available, the first constraint of~\eqref{opt:sos} can be replaced by 
\[
p(\x_i,y) - p(\x_i,y_i) + \gamma(\x_i,y_i) - \alpha(y-y_i)^2 = \sigma_{0,i}\quad i=1,\ldots,N,
\]
with $\sigma_{0,i} \in \Sigma_{d_y}$. This is equivalent to $p(\x_i,y) - p(\x_i,y_i) + \gamma(\x_i,y_i) - \alpha(y-y_i)^2$ being nonnegative on $\R$ for each $i = 1,\ldots,N$.
\end{rem}
}

{We note that, up to rescaling of $p$ and $\gamma$, the problems  \eqref{opt:pos_data} and\eqref{opt:sos} are invariant with respect to the choice of the parameter $\alpha > 0$. We decided to include this parameter as a simple way to control the scaling of the coefficients of $p$ and $\gamma$ in the numerical implementation.}
{
\begin{rem}\label{rem:optimality}
For piecewice constant and piecewice polynomial functions described in \S\,\ref{sec:piecewise_const} and \S\,\ref{sec:piece-poly}, the polynomials $p$ providing their argmin representation constructed  in Theorem~\ref{thm:piecewise_cosnt} and Corollary~\ref{cor:piecewise_cosnt} are optimal in (\ref{opt:pos_data}) after rescaling (which does not change the argmin). This holds as long as the samples $\x_i$ lie outside of the points of discontinuity of $f$ (which is of zero Lebesgue measure). In other words, our blind data-driven approach is able to recover exactly optimal solutions for a large class of functions for which a Gibbs  phenomenon would occur with more classical approaches. To prove the optimality of $p$, it suffices to observe that by construction $(\partial^2 p / \partial y^2) (\x_i,y_i) > 0$ if all $\x_i$'s lie outside the points of discontinuity of $f$. Since there are finitely many data points, we can rescale $p$ such that $(\partial^2 p / \partial y^2) (\x_i,y_i) > 2\alpha$ for all $i\in 1,\ldots,N$. Since $(\partial p / \partial y) (\x_i,y_i) = 0$, we get by Taylor's theorem $p(\x_i,y) - p(\x_i,y_i) \ge \alpha(y-y_i)^2$. This estimate holds globally since $y_i$ is the unique global minimizer of $p(\x_i,\cdot)$ and $p(\x_i,\cdot)$ is coercive. Therefore $p$ satisfies the constraints of (\ref{opt:pos_data}) with $\gamma = 0$ and is therefore optimal since $\gamma$ is nonnegative.

\end{rem}
 }



{
\begin{rem}
    We remark that with $\alpha > 0$ the argmin in~(\ref{approx}) is unique on a given data point $x_i$ provided that $\gamma(x_i,y_i) = 0$ (i.e., there is a zero error on the data point). Outside of the data set, we cannot guarantee the uniqueness of the argmin in which case a tiebreaker rule must be applied (e.g., taking the min of the argmin). Importantly, the results below are independent of whether the argmin is unique or not since they apply to the entire set of minimizers.
\end{rem}
}

The following result bounds the error on data points of any feasible solution to (\ref{opt:pos_data}) and (\ref{opt:sos}).

\begin{lem}[Error on data points]\label{lem:error_on_data}
Let $p$, $\gamma$ and $\alpha$ be feasible in~(\ref{opt:pos_data}) or (\ref{opt:sos}) and let
\[
z_i \in \arg\min_{y\in[a,b]} p(\x_i,y).
\]
Then we have
\[
 |z_i - y_i| \le \sqrt{\frac{\gamma(\x_i,y_i)}{\alpha}}
\]
and therefore 
\[
|\app(\x_i) - f(\x_i)|\le \sqrt{\frac{\gamma(\x_i,y_i)}{\alpha}}
\]
for $i=1,\ldots, N$.
\end{lem}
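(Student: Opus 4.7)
The plan is to exploit the first constraint of \eqref{opt:pos_data} evaluated at the specific point $y = z_i$, combined with the fact that $z_i$ is a minimizer of $p(\x_i,\cdot)$ over $[a,b]$ while $y_i$ itself belongs to $[a,b]$ (since $y_i = f(\x_i) \in \Y = [a,b]$ by assumption on the samples).

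First I would substitute $y = z_i \in [a,b]$ into the feasibility inequality
\[
p(\x_i,y) - p(\x_i,y_i) + \gamma(\x_i,y_i) - \alpha(y - y_i)^2 \ge 0,
\]
which yields
\[
p(\x_i,z_i) - p(\x_i,y_i) \;\ge\; \alpha(z_i - y_i)^2 - \gamma(\x_i,y_i).
\]
Next I would use the minimality of $z_i$: since $y_i \in [a,b]$, the definition of $z_i$ forces $p(\x_i, z_i) \le p(\x_i, y_i)$, hence the left-hand side above is nonpositive. Chaining the two inequalities gives
\[
0 \;\ge\; \alpha(z_i - y_i)^2 - \gamma(\x_i,y_i),
\]
and dividing by $\alpha > 0$ and taking square roots yields the first claim
\[
|z_i - y_i| \;\le\; \sqrt{\gamma(\x_i,y_i)/\alpha}.
\]

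For the second inequality, I would simply observe that $\app(\x_i) = \min\{\arg\min_{y \in [a,b]} p(\x_i,y)\}$ is itself a particular choice of $z_i$, so the already established bound applies verbatim with $z_i$ replaced by $\app(\x_i)$, and by definition $f(\x_i) = y_i$, finishing the proof.

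There is essentially no obstacle here: the whole argument is a one-line consequence of the carefully engineered quadratic lower bound in the constraint, and the only mildly delicate point is to make sure the inclusion $y_i \in [a,b]$ is used to justify $p(\x_i, z_i) \le p(\x_i, y_i)$. The nonnegativity constraint $\gamma(\x_i,y_i) \ge 0$ on the second line of \eqref{opt:pos_data} ensures that the square root on the right-hand side is well defined.
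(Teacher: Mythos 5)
Your proof is correct and follows essentially the same route as the paper's: evaluate the feasibility constraint at $y=z_i$, use $p(\x_i,z_i)\le p(\x_i,y_i)$ from the minimality of $z_i$ over $[a,b]\ni y_i$, and rearrange to get the quadratic bound; the second claim then follows since $\app(\x_i)$ is one such $z_i$ and $y_i=f(\x_i)$. No gaps.
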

\begin{proof}
Let $(\x_i,y_i)$ be fixed. Observe that for any $z_i \in \arg\min_{y\in[a,b]} p(\x_i,y)$, we have $p(\x_i,z_i) - p(\x_i,y_i) \le 0$. Therefore we have by the first constraint of~(\ref{opt:pos_data}) or (\ref{opt:sos})
\[
\gamma(\x_i,y_i) - \alpha(z_i-y_i)^2 \ge p(\x_i,z_i) - p(\x_i,y_i) + \gamma(\x_i,y_i) - \alpha(z_i-y_i)^2 \ge 0.
\]
Therefore 
\[
|z_i - y_i| \le \sqrt{\frac{\gamma(\x_i,y_i)}{\alpha}}.
\]
The last statement of the lemma follows by the facts that $\app(\x_i) \in \arg\min_{y\in[a,b]}p(\x_i,y)$ with $p$ optimal in~(\ref{opt:pos_data}) and (\ref{opt:sos}) and $y_i = f(\x_i)$.
\end{proof}

\subsection{Generalization error}\label{sec:generalization}
In this section we study the generalization error of the argmin estimator in a probabilistic setting. We assume that the samples $\x_i$, $i=1,\ldots,N$, are independent identically distributed, drawn from a probability distribution $\Pb$ on $\X$ that is unknown to us. We study the generalization error using the tools of scenario optimization, which allows for analysis with minimal underlying assumptions on $f$. The generalization bounds obtained have no explicit dependence on the dimension of the ambient space $n$ and on regularity of $f$. They depend only the number of decision variables in~(\ref{opt:pos_data}), which, however, may depend implicitly on $n$.

We first observe that Problem~\ref{opt:pos_data} can be equivalently rewritten in the form
\begin{equation} \label{opt:generalization_form}
\begin{aligned}
 \min\limits_{\theta \in \R^{n_\theta}} \quad & c^\top\theta \\
\textrm{s.t.} \quad &   \inf_{y\in[a,b]}\{ p_\theta(\x_i,y) - p_\theta(\x_i,y_i) + \gamma_\theta(\x_i,y_i) - \alpha(y-y_i)^2\}  \ge 0 ,\;\; i=1,\ldots,N\\
   \quad & \gamma(\x_i,y_i) \ge 0,\quad i=1,\ldots,N,
 \end{aligned}
\end{equation}
where $\theta \in \R^{n_\theta}$ gathers all the decision variables of~(\ref{opt:pos_data}), i.e., the coefficients of $(h_k)_{k=1}^{d_y}$ and $\gamma$, and $c \in \R^{n_\theta}$ is a constant vector such that $c^\top \theta = \int \gamma\, d\x dy$. 

Problem~(\ref{opt:generalization_form}) can be rewritten as
\begin{equation} \label{opt:scenario_sample}
\begin{aligned}
 \min\limits_{\theta \in \R^{n_\theta}} \quad & \theta^\top c \\
\textrm{s.t.} \quad &  \theta \in \bigcap_{i=1}^N\Theta_i,
\end{aligned}
\end{equation}
where the set $\Theta_i$ is defined by
\[
\Theta_i := \Big\{\theta \mid \inf_{y\in[a,b]} \{p_\theta(\x_i,y) - p_\theta(\x_i,y_i) + \gamma_\theta(\x_i,y_i) - \alpha(y-y_i)^2  \ge 0\}, \; \gamma_\theta(\x_i,y_i) \ge 0
\Big\}.
\]
We also define 
\[
\Theta_\x := \Big\{\theta \mid \inf_{y\in[a,b]}\{p_\theta(\x,y) - p_\theta(\x,f(\x)) + \gamma_\theta(\x,f(\x)) - \alpha(y-f(\x))^2  \ge 0\} ,\; \gamma_\theta(\x,f(\x)) \ge 0 \Big\}.
\]
We observe that~(\ref{opt:scenario_sample}) is the so-called scenario counterpart of the robust optimization problem
\begin{equation} \label{opt:robust}
\begin{aligned}
 \min\limits_{\theta \in \R^{n_\theta}} \quad & \theta^\top c \\
\textrm{s.t.} \quad &  \theta \in \bigcap_{\x\in\X}\Theta_\x.
\end{aligned}
\end{equation}
In other words, the feasible set in~(\ref{opt:scenario_sample}) is a sub-sampled version of the feasible set of~(\ref{opt:robust}), where only $N$ constraints, drawn independently, are enforced. Crucially, we remark that both $\Theta_i$ and $\Theta_\x$ are convex and so are the feasible sets of~(\ref{opt:scenario_sample}) and (\ref{opt:robust}). With these notations, we can state the following theorem:
\begin{thm}\label{thm:generalization}
Let $n_\theta$ denote the number of decision variables in~(\ref{opt:scenario_sample}). Suppose that $\epsilon \in (0,1)$, $\delta \in (0,1)$ and $N \in \N$ are chosen such that
\begin{equation}\label{eq:bound}
\sum_{k=0}^{n_{\theta}-1}
{N\choose k}
\epsilon^k(1-\epsilon)^{N-k}
\le \delta
\end{equation}
or 
\begin{equation}\label{eq:bound_sufficient}
N \ge \frac{1}{\epsilon}\left(n_\theta-1 + \ln\frac{1}{\delta} + \sqrt{2(n_\theta-1)\ln\frac{1}{\delta}}  \right)
\end{equation}
hold ((\ref{eq:bound_sufficient}) is a sufficient condition for~(\ref{eq:bound})).
Let $(p,\gamma)$ be an optimal solution to~(\ref{opt:sos}) with $N$ iid samples from a probability distribution $\Pb$ on $\X$ and denote \[
\gamma_{\mathrm{max}} := \sup_{(\x,y)\in\X\times [a,b]}\gamma(\x,y).
\]
Then with probability at least $1-\delta$ (taken over the sample $\x_1,\ldots,\x_N$ with joint distribution $\underbrace{\Pb\otimes\ldots\otimes\Pb}_{N\;\mathrm{times}}$, we have
\begin{equation}\label{eq:bound_main}
\Pb\Big( \big\{\x \in \X : |\app(\x) - f(\x)| \le \sqrt{\frac{\gamma_{\mathrm{max}}}{\alpha}} \big\}   \Big) > 1 - \epsilon,
\end{equation}
where $\app(\x)$ is any measurable selection satisfying $\app(\x) \in \Argmin\limits_{y\in \Y} p(\x,y)$. 
\end{thm}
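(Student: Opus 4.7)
The plan is to view Theorem~\ref{thm:generalization} as a direct application of the classical scenario-optimization result of Calafiore--Campi / Campi--Garatti, followed by a pointwise translation of the resulting $\epsilon$-feasibility into the promised bound on $|\app(\x)-f(\x)|$. All the structural work has already been laid out in the excerpt: problem~\eqref{opt:scenario_sample} is the scenario counterpart of the robust problem~\eqref{opt:robust}; both feasible sets are convex intersections of (infinitely many, resp. $N$) convex sets $\Theta_i,\Theta_\x$; and the linear cost $c^\top\theta$ is deterministic. Hence the hypotheses of a standard convex scenario theorem are met.

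First, I would invoke the scenario-optimization theorem in the form: if $\theta^\star$ denotes any optimal solution of~\eqref{opt:scenario_sample} with $N$ iid scenarios drawn from $\Pb$, then for every $\epsilon\in(0,1)$,
\[
\Pb^{\otimes N}\Big\{\,\x_1,\dots,\x_N : \Pb\big(\{\x : \theta^\star \notin \Theta_\x\}\big) > \epsilon\,\Big\}
\;\le\; \sum_{k=0}^{n_\theta-1}\binom{N}{k}\epsilon^k(1-\epsilon)^{N-k}.
\]
The hypothesis~\eqref{eq:bound} therefore directly yields, with probability at least $1-\delta$ over the sample,
\[
\Pb\big(\{\x \in \X : \theta^\star \in \Theta_\x\}\big) \;\ge\; 1-\epsilon.
\]
(If convexity of $\Theta_\x$ is not enough to ensure uniqueness or non-degeneracy, one appeals to the version of the theorem that handles multiple optima via a tie-break rule; our convention ``$\min$ of the $\argmin$'' provides one.) The implication \eqref{eq:bound_sufficient} $\Rightarrow$ \eqref{eq:bound} is a standard Chernoff-type estimate for the upper tail of a Binomial, so I would either cite it or verify it with a one-line exponential-moment argument.

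Second, I would translate ``$\theta^\star \in \Theta_\x$'' into the pointwise bound on the error. By the definition of $\Theta_\x$, the pair $(p,\gamma)$ returned by~\eqref{opt:sos} satisfies, for every such $\x$,
\[
p(\x,y) - p(\x,f(\x)) + \gamma(\x,f(\x)) - \alpha(y-f(\x))^2 \;\ge\; 0 \qquad \forall\, y\in[a,b],
\]
together with $\gamma(\x,f(\x))\ge 0$. Specializing this to $y=\app(\x)\in\Argmin_{y\in[a,b]} p(\x,y)$ and using $p(\x,\app(\x))-p(\x,f(\x))\le 0$ by optimality of $\app(\x)$, one obtains
\[
\alpha\,(\app(\x)-f(\x))^2 \;\le\; \gamma(\x,f(\x)) \;\le\; \gamma_{\mathrm{max}}.
\]
Taking square roots yields $|\app(\x)-f(\x)| \le \sqrt{\gamma_{\mathrm{max}}/\alpha}$, which combined with the probability-$(1-\epsilon)$ event above gives exactly~\eqref{eq:bound_main}.

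The conceptual core is thus trivial once the scenario framework is in place; the only delicate point I anticipate is the measurability of $\x\mapsto \app(\x)$ (needed so that $\Pb$ can be evaluated on the event in~\eqref{eq:bound_main}) and the handling of non-unique argmin — both resolved by choosing a measurable selection such as the pointwise minimum, which is measurable because $y\mapsto p(\x,y)$ is a jointly continuous polynomial on the compact set $[a,b]$. Everything else is a clean citation-plus-substitution argument.
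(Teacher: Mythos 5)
Your proposal is correct and follows essentially the same route as the paper: invoke the Campi--Garatti scenario theorem to conclude that with confidence $1-\delta$ the optimal $\theta$ lies in $\Theta_\x$ for all $\x$ outside a set of $\Pb$-measure at most $\epsilon$, then specialize the defining inequality of $\Theta_\x$ at $y=\app(\x)$ and use $p(\x,\app(\x))-p(\x,f(\x))\le 0$ to obtain $\alpha(\app(\x)-f(\x))^2\le\gamma(\x,f(\x))\le\gamma_{\mathrm{max}}$, with the sufficiency of \eqref{eq:bound_sufficient} for \eqref{eq:bound} cited from the literature exactly as the paper does. Your added remarks on tie-breaking and measurable selection are reasonable refinements rather than a different argument.
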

\begin{rem}
We remark that if the points $\x_i$ are sampled uniformly in $\X$, the probability in~(\ref{eq:bound_main}) is nothing but the normalized Lebesgue measure on $\X$.
\end{rem}

{
\begin{rem}[Parameter choice]
Theorem~\ref{thm:generalization} can be used as a guiding tool for selecting the parameters $d_x,d_y$. Specifically, these can be increased incrementally until a desired accuracy measured by $\gamma_{\mathrm{max}}$ is obtained. This is especially appealing when the degree of $\gamma$ is zero (i.e., $\gamma$ is a real number) in which case the value of $\gamma_{\mathrm{max}}$ is readily available. Otherwise, this value can be upper-bounded using the moment-sum-of-squares hierarchy~\cite{lasserre01} or other methods providing rigorous bounds on the range of a polynomial.
\end{rem}
}

\begin{proof}[Proof of Theorem~\ref{thm:generalization}.]
Any optimal solution $(p,\gamma)$ of~(\ref{opt:sos}) induces an optimal solution $\theta$ to~(\ref{opt:scenario_sample}) and vice-versa. Given such an optimal solution, \cite[Theorem~1]{campi} asserts that if~(\ref{eq:bound}) holds, then with probability at least $1-\delta$
\[
\Pb(  \{\x \mid \theta \not\in \Theta_\x \}  ) \le \epsilon
\]
or equivalently
\[
\Pb(  \{\x \mid \theta \in \Theta_\x \}  ) > 1-\epsilon.
\]
Using the definition of $\Theta_\x$, this implies that
\[
 \Pb\big(A \big) > 1-\epsilon,
 \]
 where
 \[
 A := \{\x\in \X \mid \inf_{y\in[a,b]}\{p(\x,y) - p(\x,f(\x)) + \gamma(\x,f(\x)) - \alpha(y-f(\x))^2  \ge 0 \}.
 \]
Given any $\x \in A$ and taking $\app(\x) \in \arg\min_{y\in[a,b]}p(\x,y)$, we have
\[
\gamma(\x,f(\x)) - \alpha(\app(\x) - f(\x))^2 \ge p(\x,\app(\x)) - p(\x,f(\x))  + \gamma(\x,f(\x)) - \alpha(\app(\x) - f(\x))^2 \ge 0,
\]
where we used the fact that $p(\x,\app(\x)) - p(\x,f(\x)) \le 0$, $\app(\x) \in [a,b]$ and that $\x \in A$. It follows that
\[
(\app(\x) - f(\x))^2 \le \frac{\gamma(\x,f(\x))}{\alpha} \le \frac{\gamma_{\mathrm{max}}}{\alpha}
\]
for all $\x \in A$. Taking square roots and recalling that $P(A) > 1-\epsilon$, we obtain the result. The condition~(\ref{eq:bound_sufficient}) is a sufficient condition for~(\ref{eq:bound}) derived in~\cite[Corollary~1]{roberto}.
\end{proof}
\begin{rem}
When the integral in the objective function~\ref{opt:pos_data} is replasserced by its empirical average~\eqref{eq:emp_average} computed from the same data set that is used to enforce the constraints, it is currently unknown whether the generalization bound of Theorem~\ref{thm:generalization} remains valid~\cite{campi_personal}. A simple remedy is to use an independent sample for the objective and for the constraints, for example by splitting the data set in two.
\end{rem}

\subsection{Beyond polynomials}
{
In this section we briefly discuss how the proposed method extends to approximants of the form
\[
\app(\x) = \arg\min_{y \in \Y} p(\x,y),
\]
where $p$ is not necessarily a polynomial.  The key observation to make is that when parametrizing $p$ as
\begin{eqnarray}\label{eq:hkparam-2}
p(\x,y) = \sum_{k=1}^{d_y} h_k(\x)y^k,
\end{eqnarray}
the functions $h_k$ appear in~\eqref{opt:pos_data} and \eqref{opt:sos} only via their evaluations at the data points $\x_i$. Therefore, parametrizing each $h_k$ as $h_{k}(\x) = \sum_{i=1}^{n_k} c_{k,i} \beta_{k,i}(\x)$, where $\beta_{k,i}$ are possibly non-polynomial basis functions, the optimization problem~(\ref{opt:sos}) remains a semidefinite programming problem with the decision variables $c_{i,k} \in \R$ and the coefficients of $\gamma$. The function $\gamma$ can be parametrized by non-polynomial basis functions in $(\x,y)$ since only evaluations of $\gamma$ at the samples $(\x_i,y_i)$ appear in~(\ref{opt:sos}).

If a non-polynomial parametrization of $p$ in $y$ was sought, one would have to resort to certificates of nonnegativity for the given function classerss akin to Theorem~\ref{thm:unisos}. Currently, this is well understood for trigonometric polynomials~\cite{dumitrescu} but we envision broader function classes may be considered, given the univariate nature of the nonnegativity certificate required in~(\ref{opt:sos}) which is significantly less challenging than its multivariate counterpart.
}

\section{Numerical examples}\label{sec:examples}

In this section we present several examples demonstrating the effectiveness of the polynomial argmin data structure for regression of functions possessing discontinuities. All examples were solved on a MacBook Air 1.2 GHz Quad-Core Intel Core i7 with 16GB RAM, MOSEK SDP solver \cite{MOSEK}. The problems were modeled using Yalmip~\cite{yalmip}. The range of all functions was normalized to $\Y = [-1,1]$ and the parameter $\alpha$ was taken to be 0.01 in all examples. The parameters that vary in the examples are $d_x$ and $d_y$ in the parametrization of $p$ in~\eqref{eq:hkparam} (i.e., $d_x$ and $d_y$ are the degrees of $p$ in $\x$ and $y$ respectively).
{Matlab prototype codes reproducing the numerical experiments can be downloaded from \url{https://homepages.laas.fr/henrion/software/polyargmin}}

\subsection{Univariate: Approximation of discontinuous functions}
We start by showing the effectiveness of the method on four discontinuous functions depicted in  Figure~\ref{fig:argmin4} alongside their polynomial argmin approximations. The functions are
\[
f_1 = \begin{cases}
-1,& x \in [-0.75,0.75] \\
1,& x\in [-1,-0.75)\cup (0.75,1],
\end{cases}\;\;
f_2 = \begin{cases}
-1,& x \in [-0.75,-0.25] \cup [0.25,0.75]  \\
1,& x\in [-1,-0.75)\cup (-0.25,0.25) \cup (0.75,1].
\end{cases}
\]
\[
f_3 = x^2f_1,\quad f_4 = \sin(2x) f_1.
\]
 In each case we used 200 data points sampled uniformly at random in $[-1,1]$ and solved~(\ref{opt:sos}). We observe a remarkably precise recovery of the discontinuous functions.
 In (\ref{eq:hkparam}), the minimal degrees $d_x$ and $d_y$ required to obtain an approximation of this accuracy are reported in the figure. For comparison, in Figure~\ref{fig:argminvsNN} we depict the performance on the same task with a neural network with five hidden layers with 20 neurons per layer with the hyperbolic tangent activation functions, trained using Matlab's neural network toolbox with gradient descent; these parameters were selected by manual hyperparameter tuning.

 \subsection{Univariate: Parameter dependence}
 Here we investigate the dependence of the approximation quality on $d_x$ which is the degree of the polynomials $h_k$ parametrizing the polynomial $p$ in (\ref{eq:hkparam}). We do so on the function from Eq. (66) in \cite{eckhoff} that possesses 7 discontinuities. For data, we use two hundred equidistantly spaced samples in the interval $[-1,1]$. The results are depicted in Figure~\ref{fig:argmin7discont}. As expected the approximation quality improves as the degree of $h_k$ increases, obtaining a very precise accuracy for $\mathrm{deg}\,h_k = 7$. For comparison, Figure~\ref{fig:argmin7discont_NN} depicts also the results with a neural network approximation. 

\begin{figure*}[ht]
\begin{picture}(140,350)
\put(-150,170){\includegraphics[width=72mm]{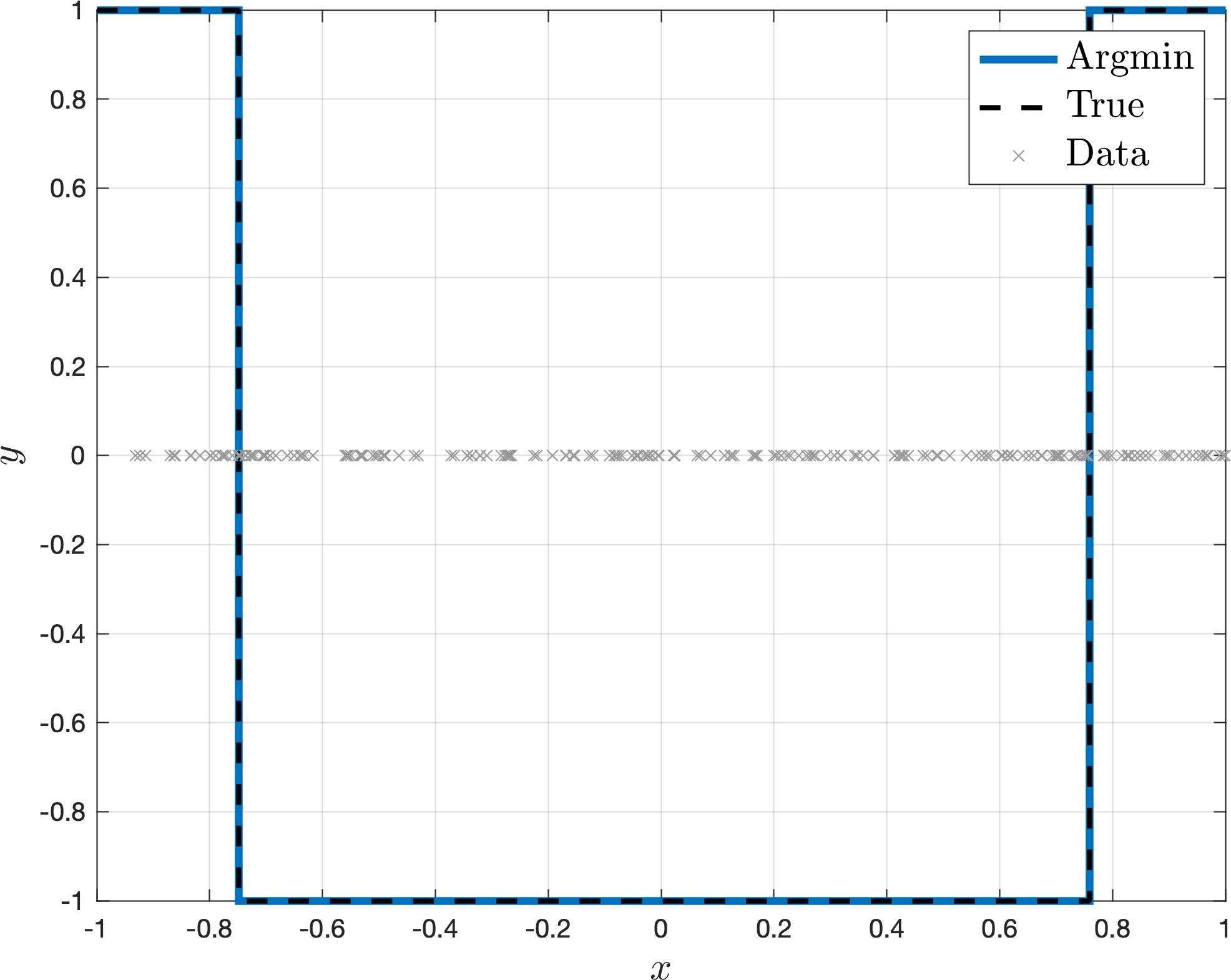}}
\put(100,170){\includegraphics[width=72mm]{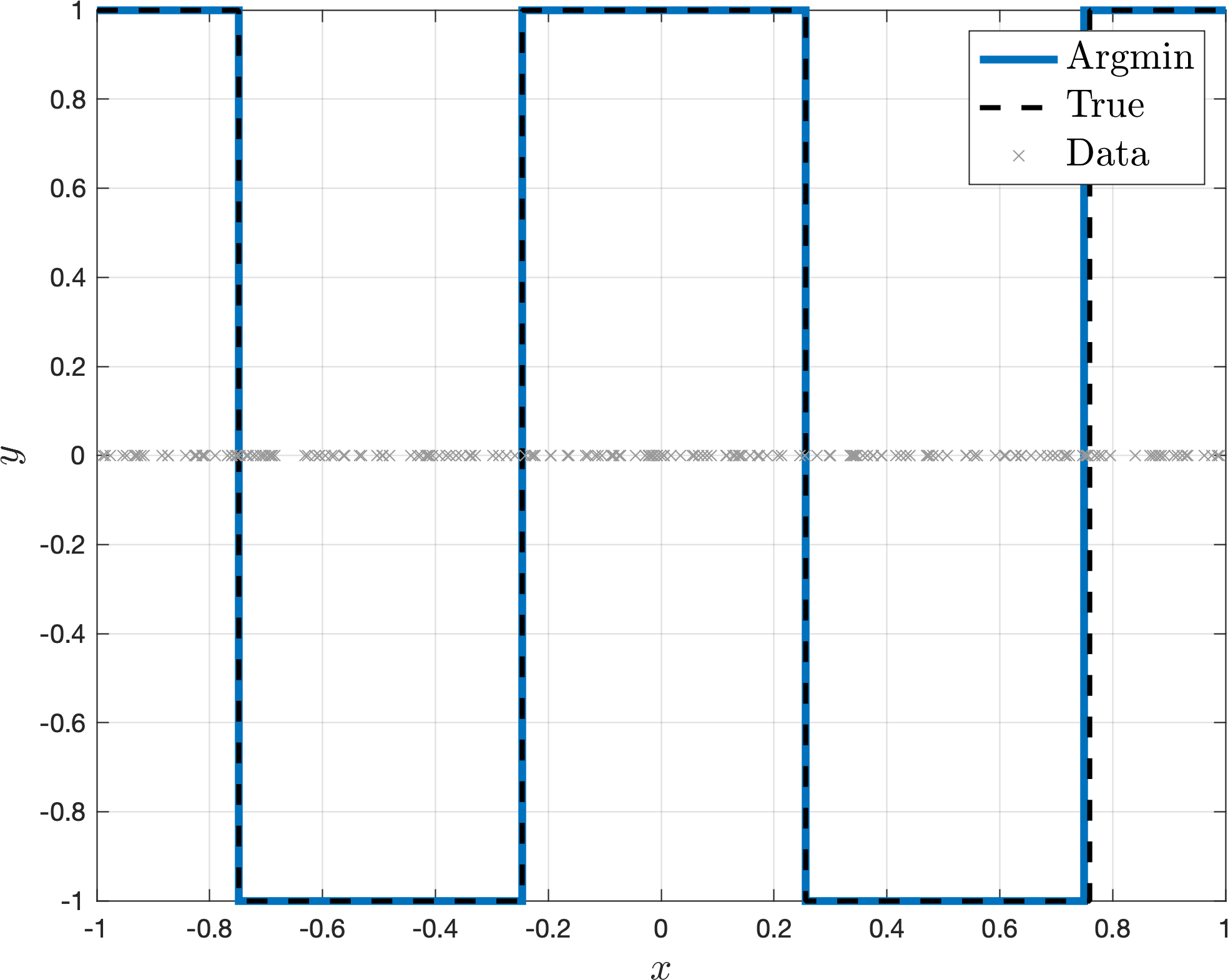}}

\put(-150,-5){\includegraphics[width=72mm]{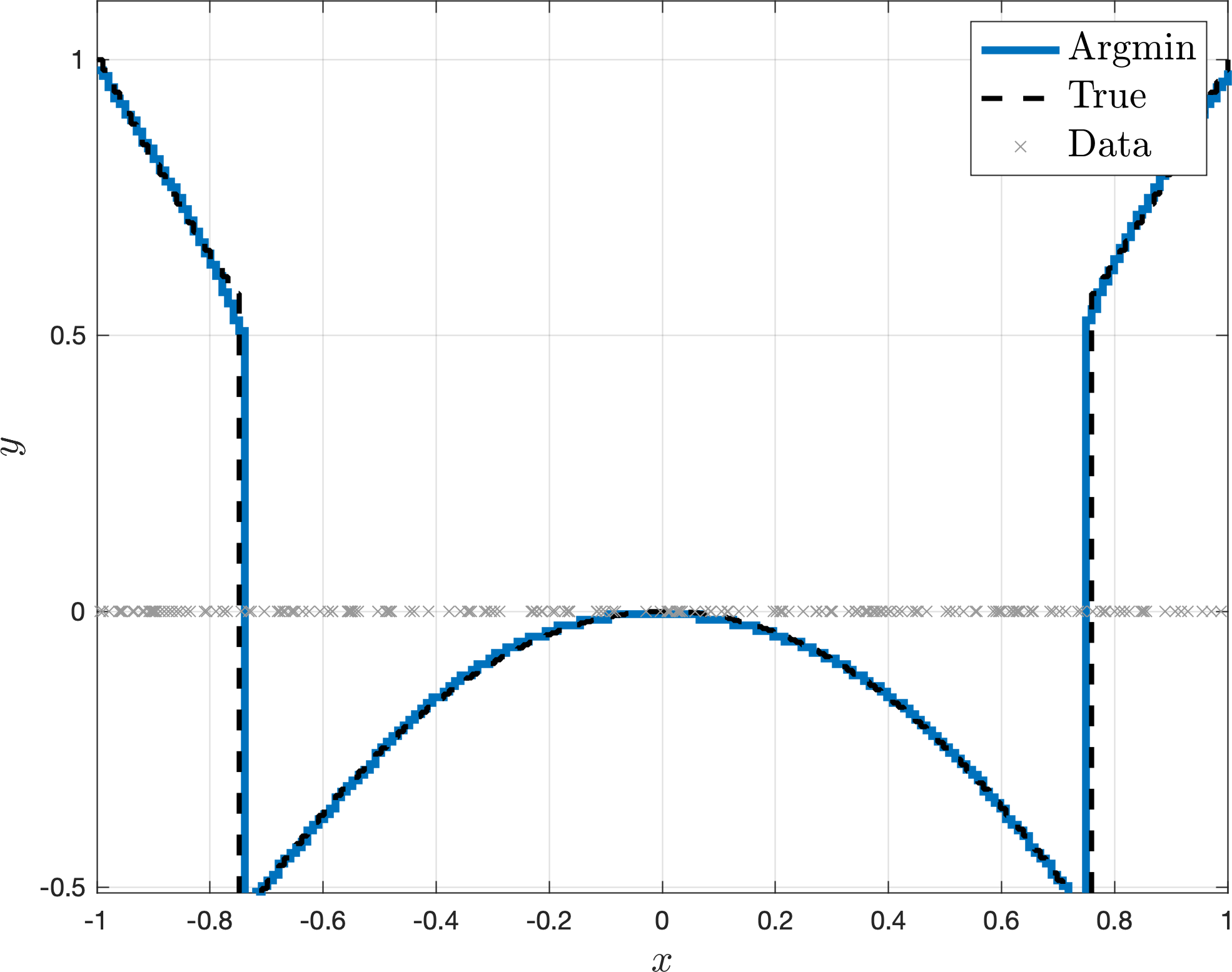}}
\put(100,-5){\includegraphics[width=72mm]{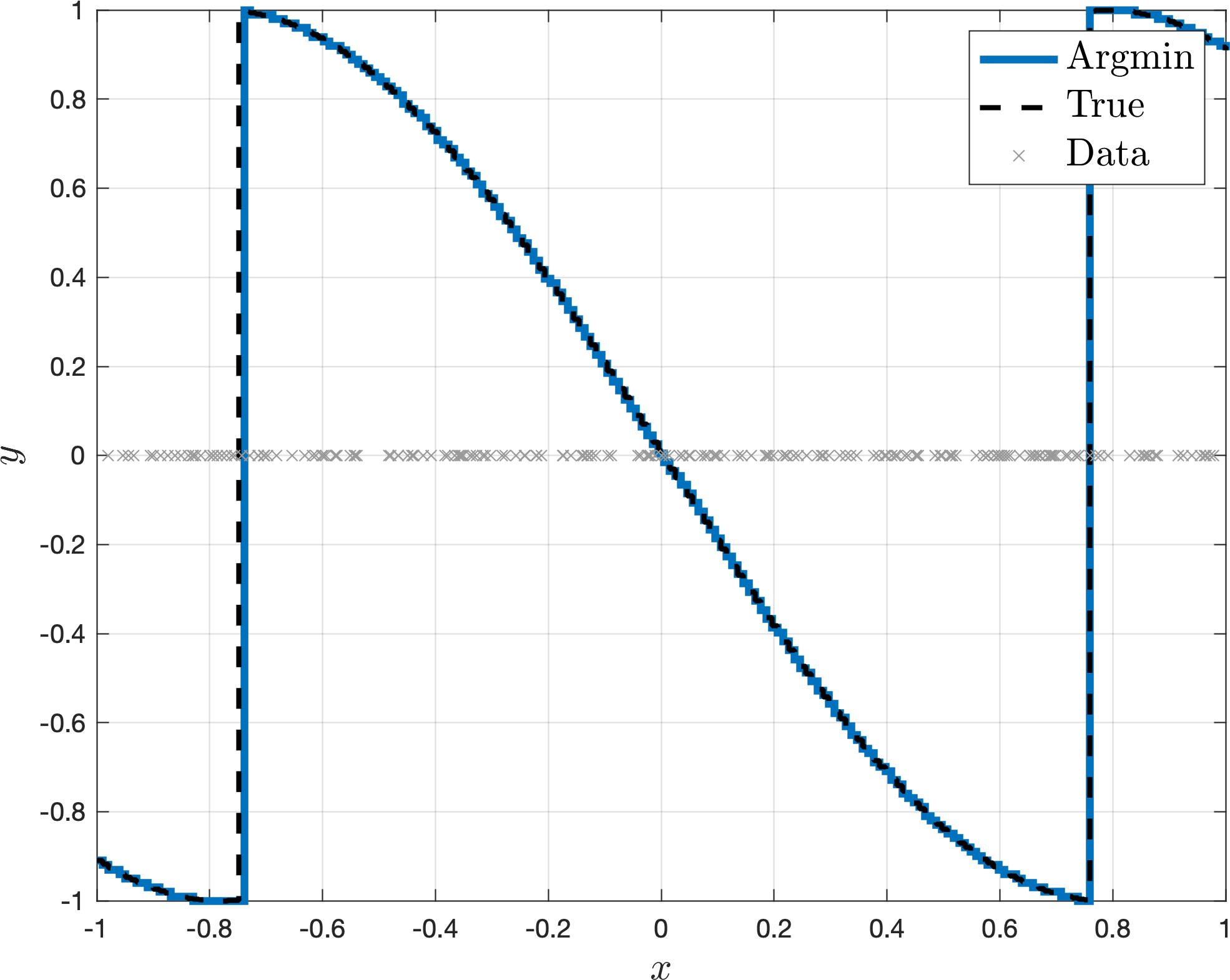}}

\put(-60,210){\scriptsize $d_y = 1$}
\put(-60,200){\scriptsize $d_x= 2$}
\put(-45,315){$f_1$}
\put(207,315){$f_2$}
\put(-45,140){$f_3$}
\put(207,140){$f_4$}

\put(192,210){\scriptsize $d_y = 4$}
\put(192,200){\scriptsize $d_x= 4$}

\put(-60,30){\scriptsize $d_y = 4$}
\put(-60,20){\scriptsize $d_x= 4$}

\put(192,30){\scriptsize $d_y = 4$}
\put(192,20){\scriptsize $d_x = 4$}

\end{picture}
\caption{\small  Polynomial argmin approximations of different functions with discontinuities.}
\label{fig:argmin4}
\end{figure*}

\begin{figure*}[ht]
\begin{picture}(140,350)
\put(-150,170){\includegraphics[width=72mm]{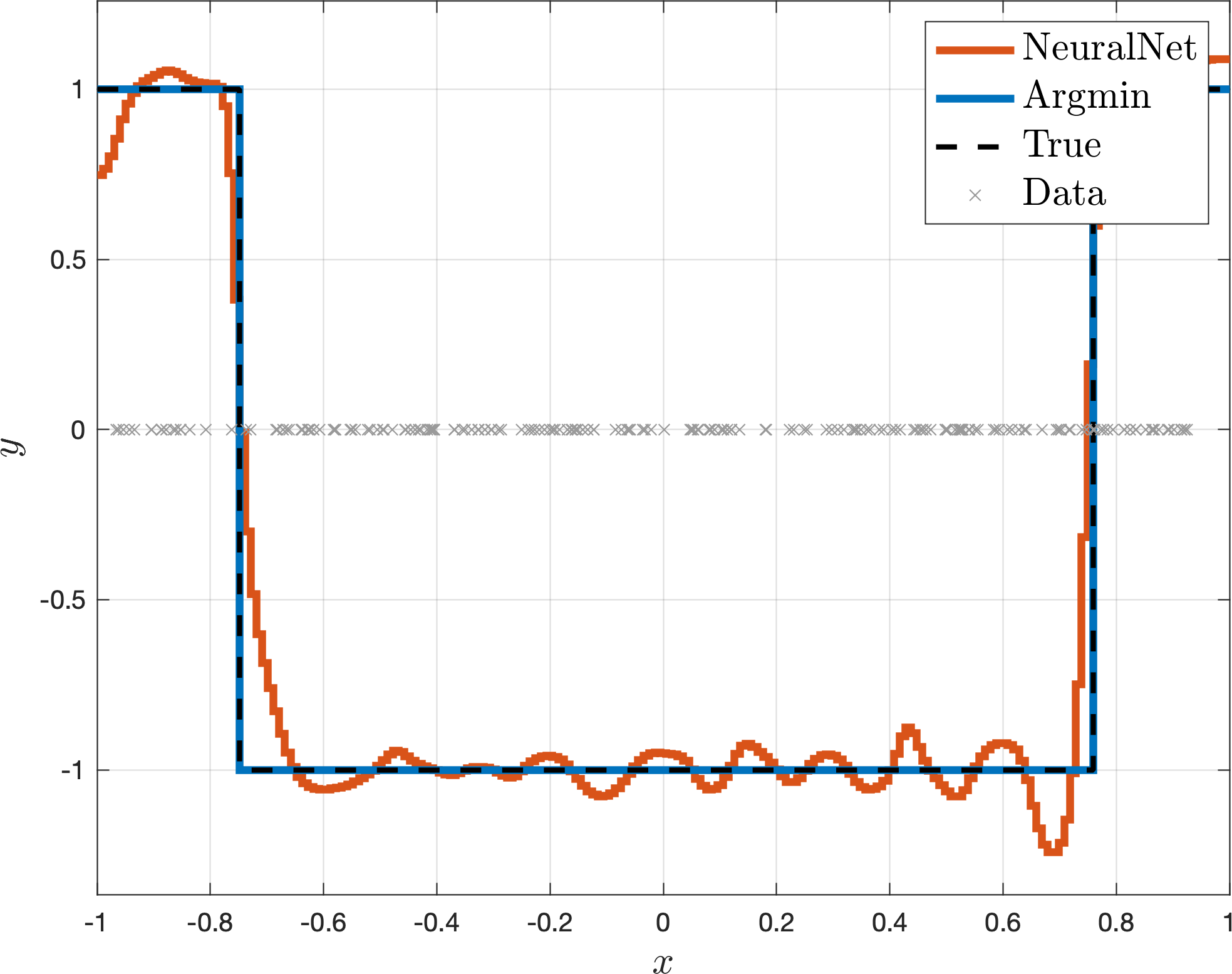}}
\put(100,170){\includegraphics[width=72mm]{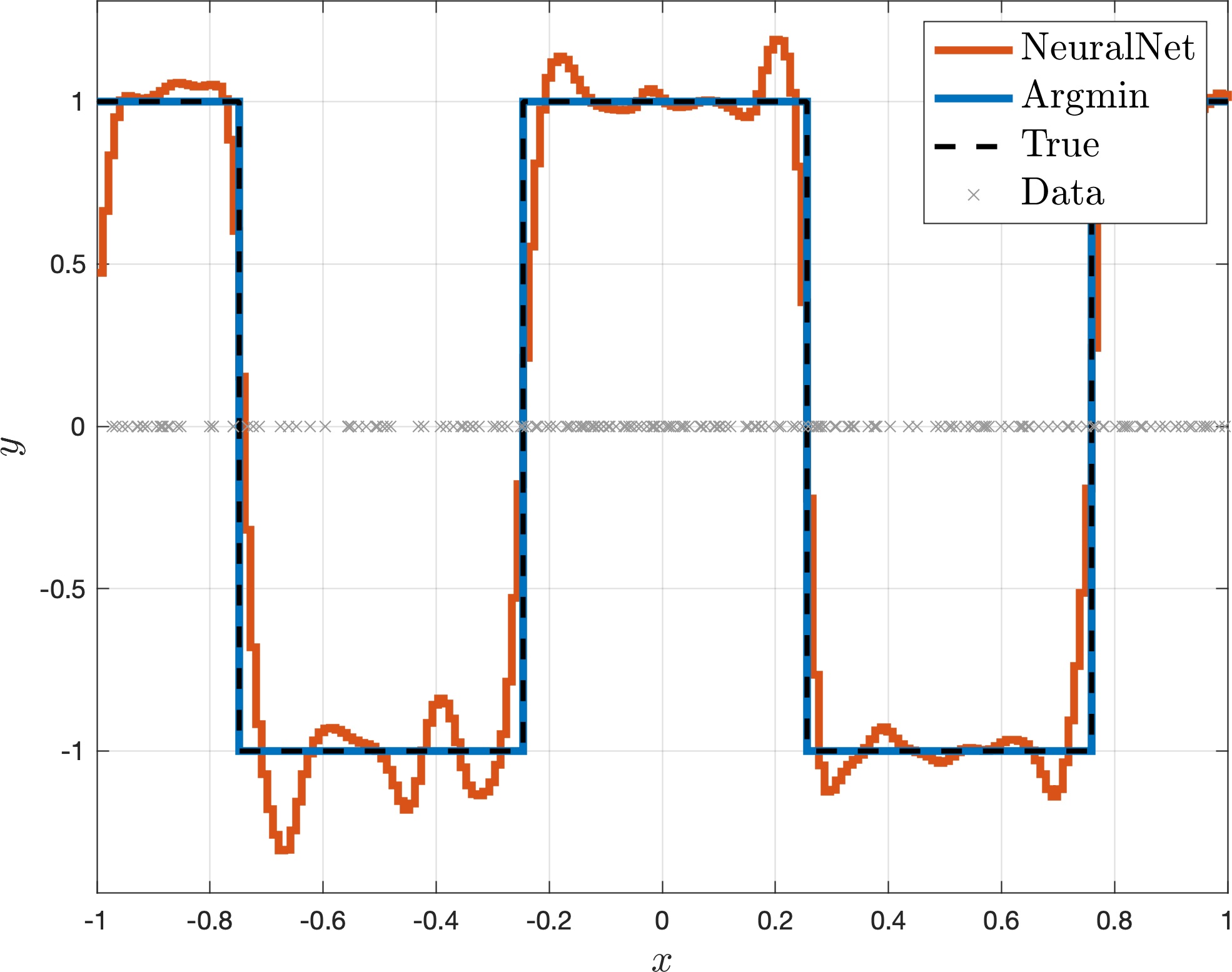}}

\put(-150,-5){\includegraphics[width=72mm]{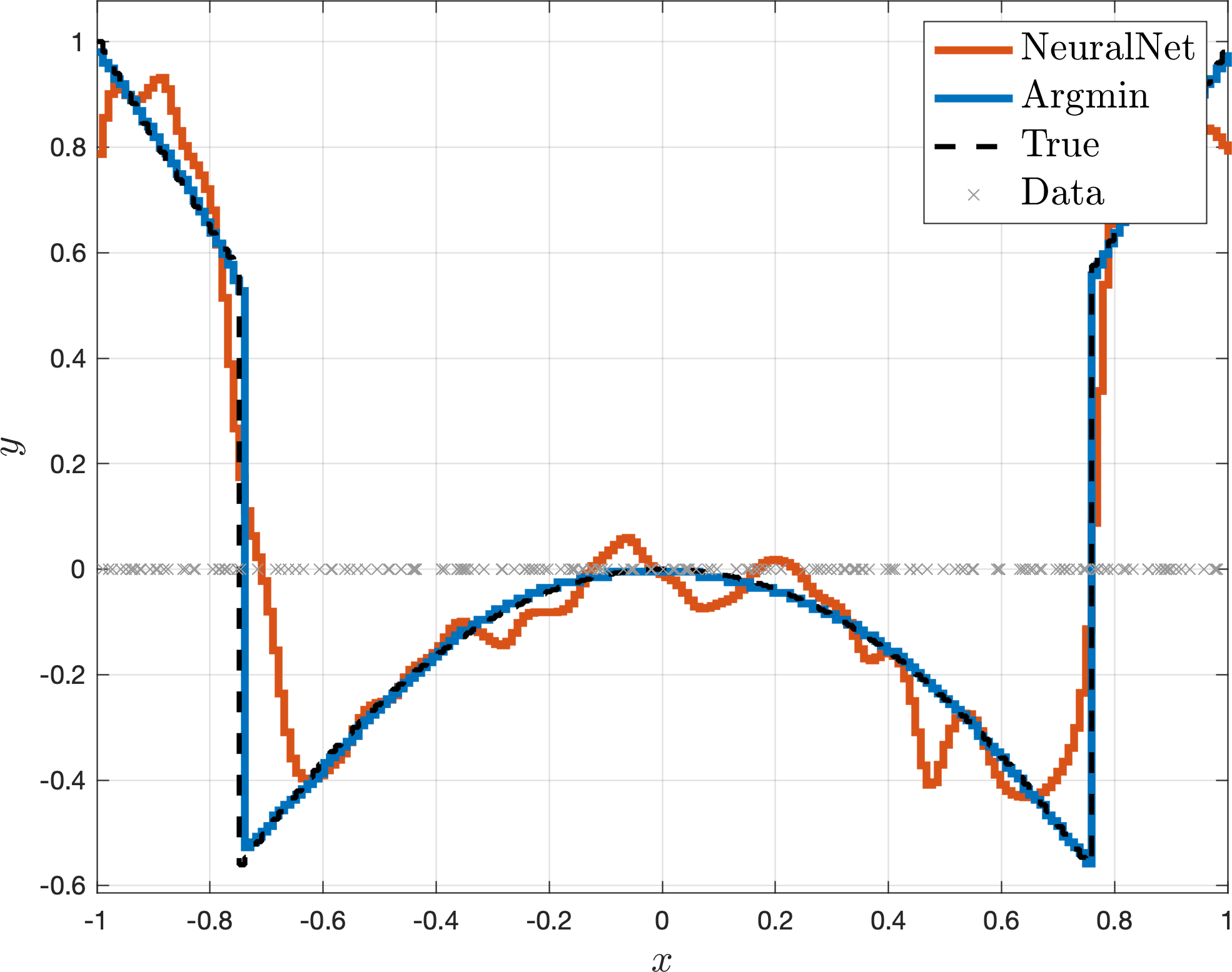}}
\put(100,-5){\includegraphics[width=72mm]{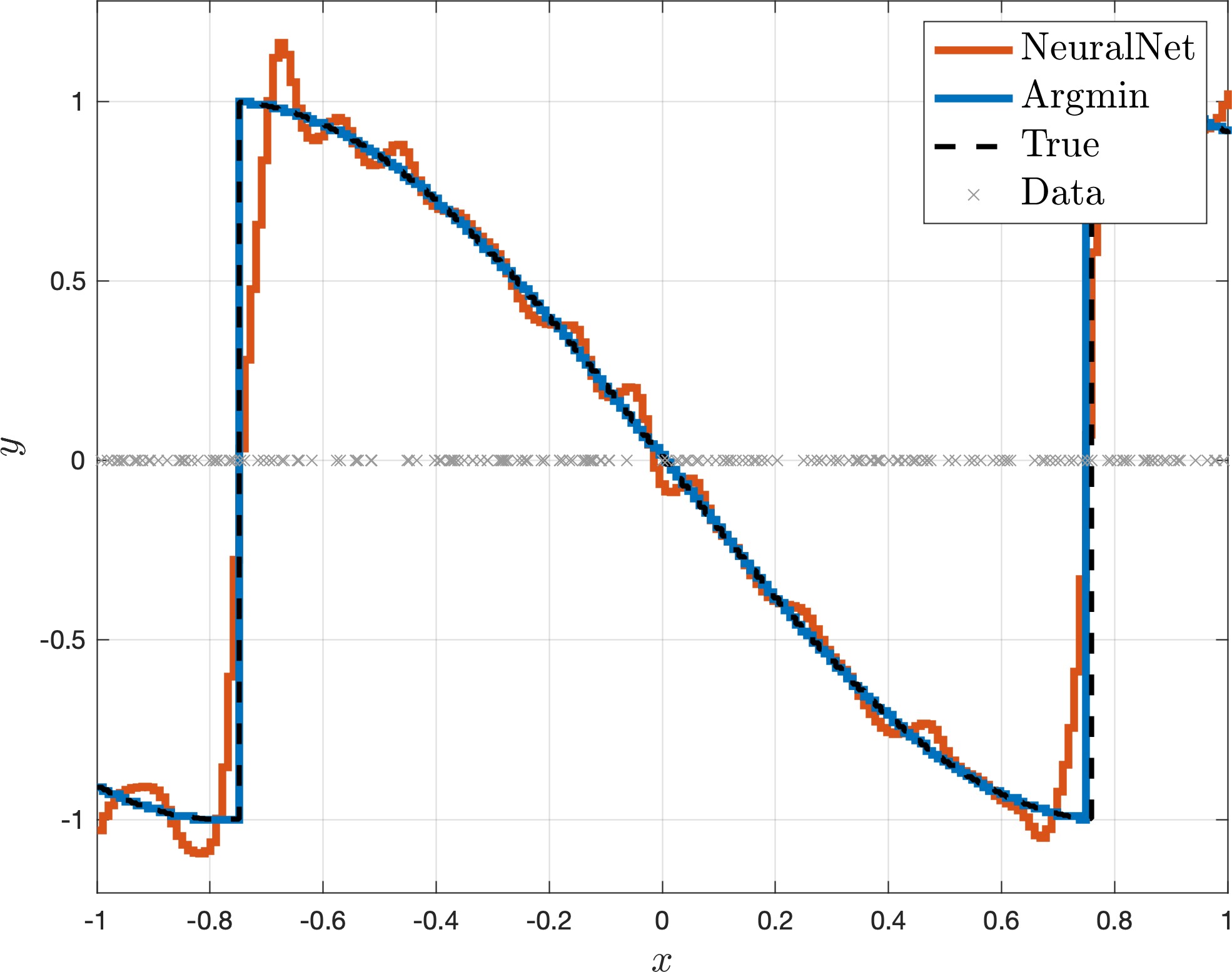}}

\put(-45,315){$f_1$}
\put(207,300){$f_2$}
\put(-45,140){$f_3$}
\put(207,140){$f_4$}

\end{picture}
\caption{\small  Polynomial argmin approximation versus neural network.}
\label{fig:argminvsNN}
\end{figure*}

\begin{figure*}[ht]
\begin{picture}(140,350)
\put(-150,170){\includegraphics[width=72mm]{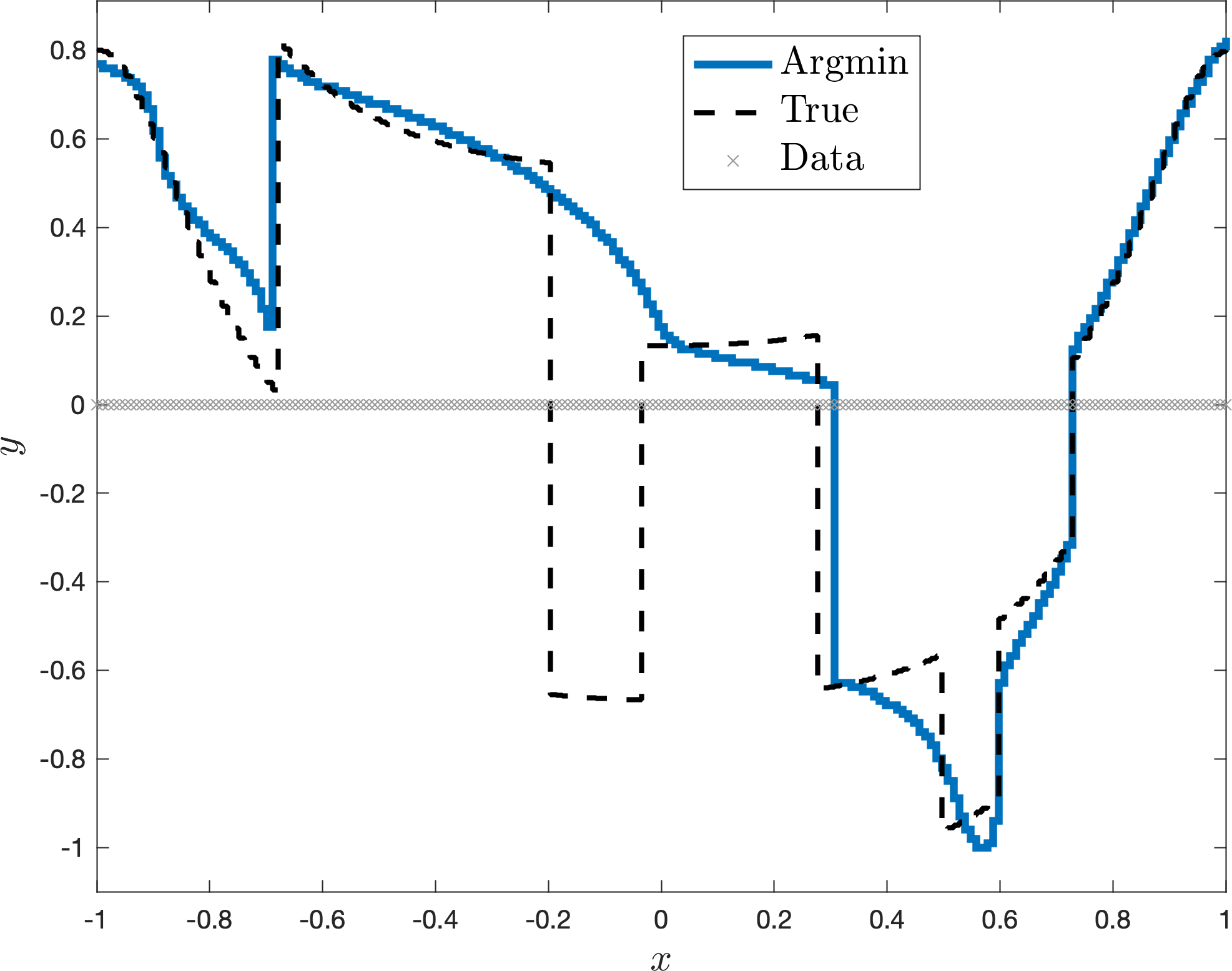}}
\put(100,170){\includegraphics[width=72mm]{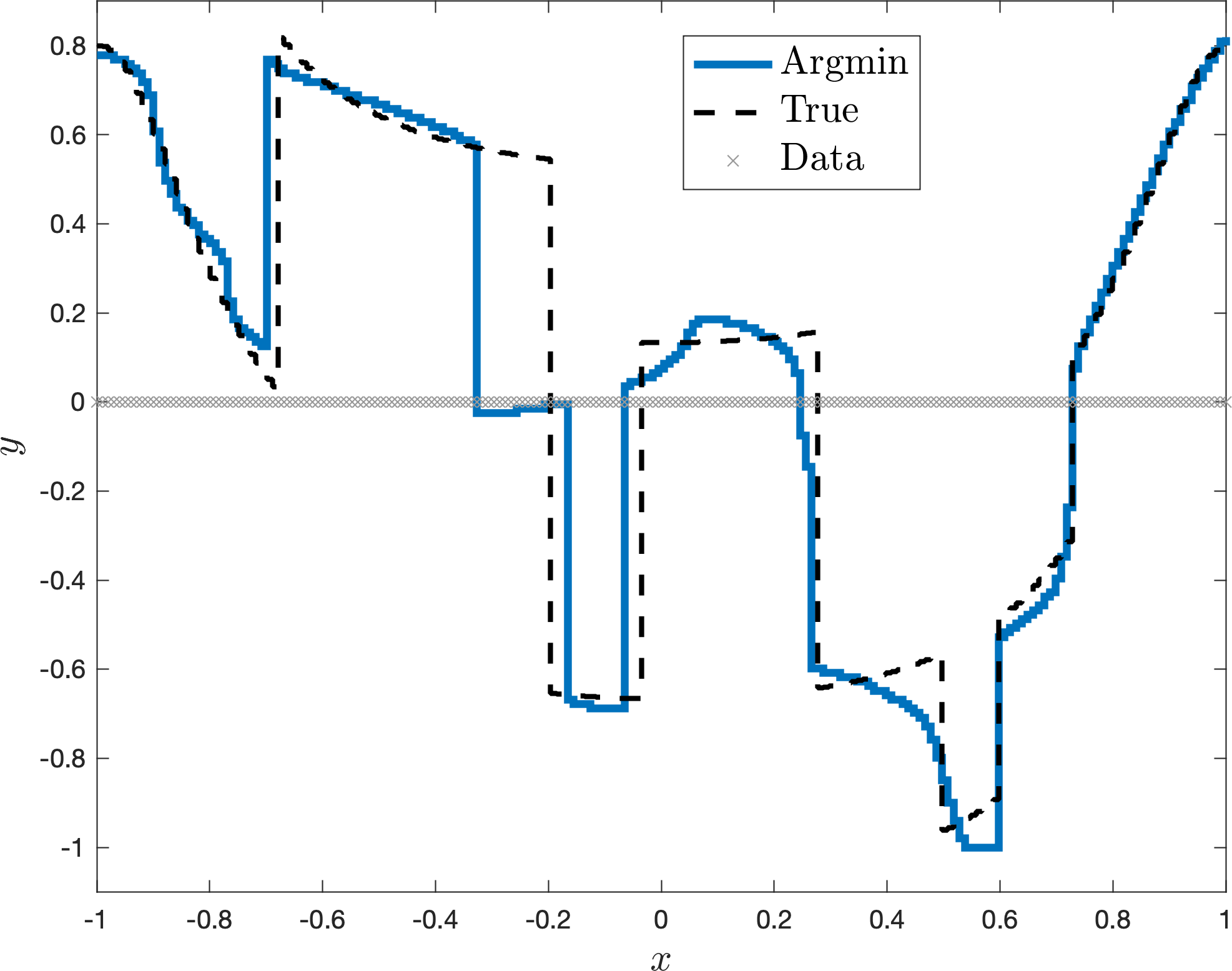}}
\put(-150,-5){\includegraphics[width=72mm]{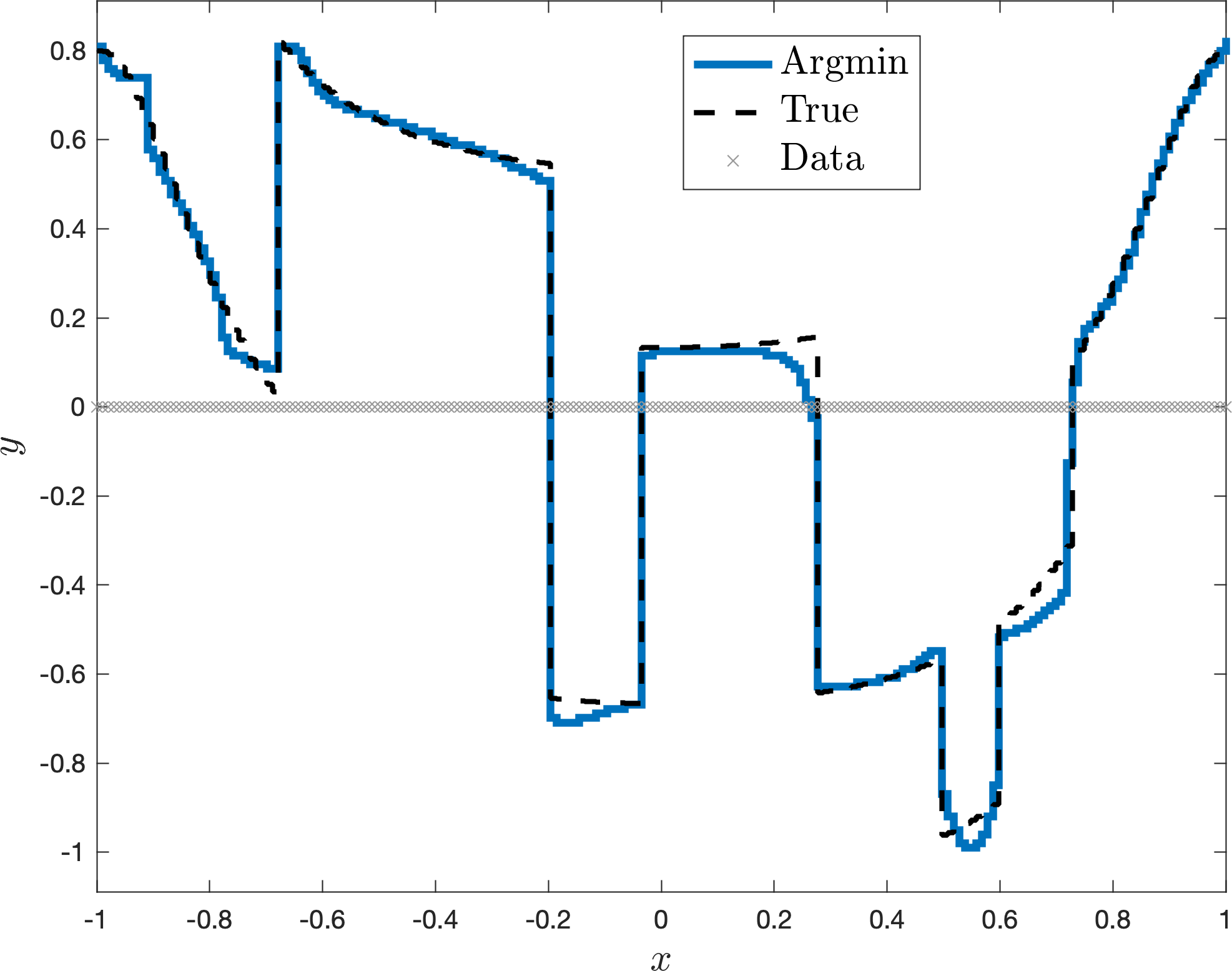}}
\put(100,-5){\includegraphics[width=72mm]{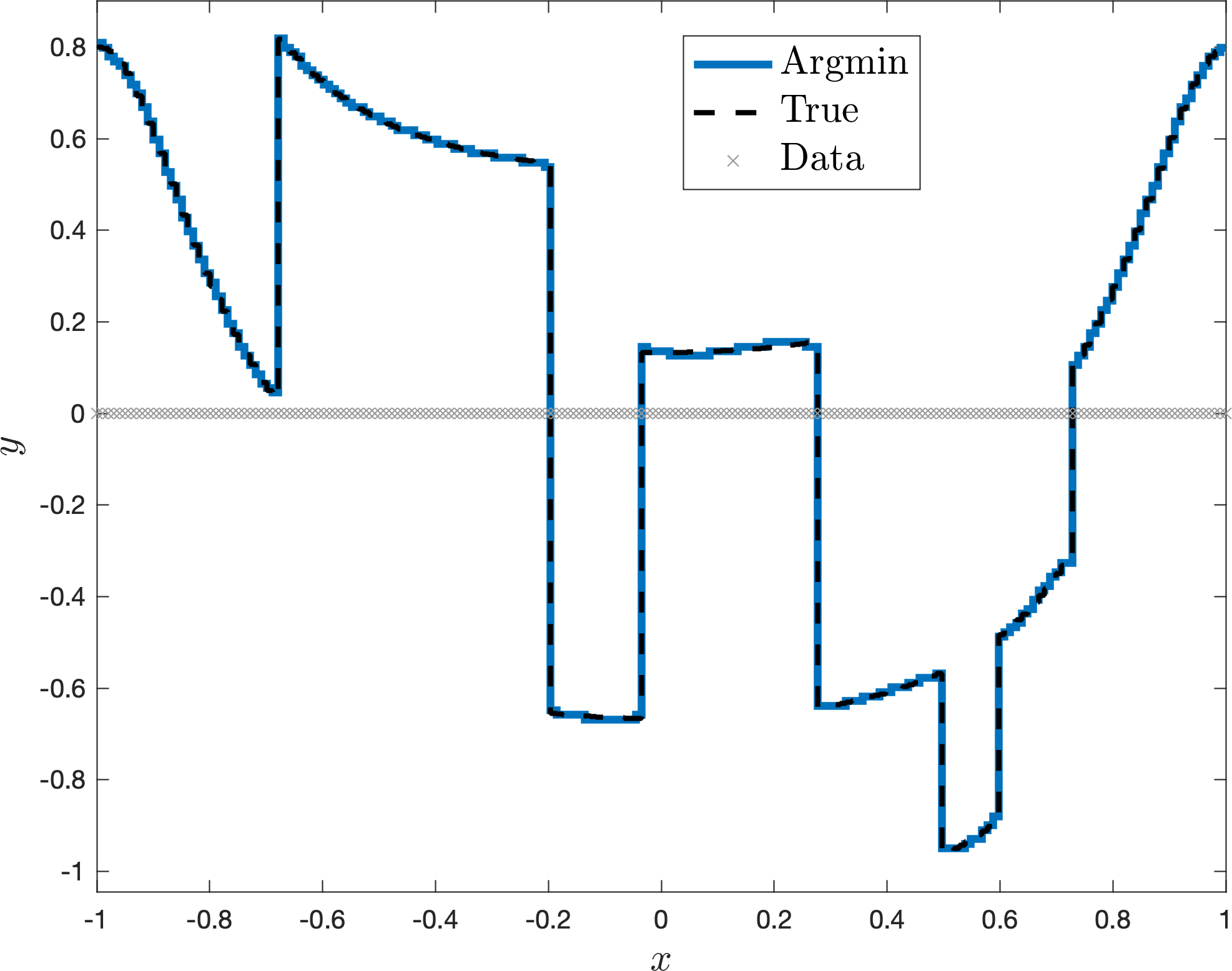}}

\put(-120,200){\small $d_x = 4$}
\put(130,200){\small $d_x = 5$}
\put(-120,25){\small $d_x = 6$}
\put(130,25){\small $d_x = 7$}

\end{picture}
\caption{\small  Polynomial argmin approximation on a function with 7 discontinuities with $d_y = 6$ for different values of $d_x$.}
\label{fig:argmin7discont}
\end{figure*}

\begin{figure*}[th]
\begin{picture}(140,350)
\put(-150,170){\includegraphics[width=72mm]{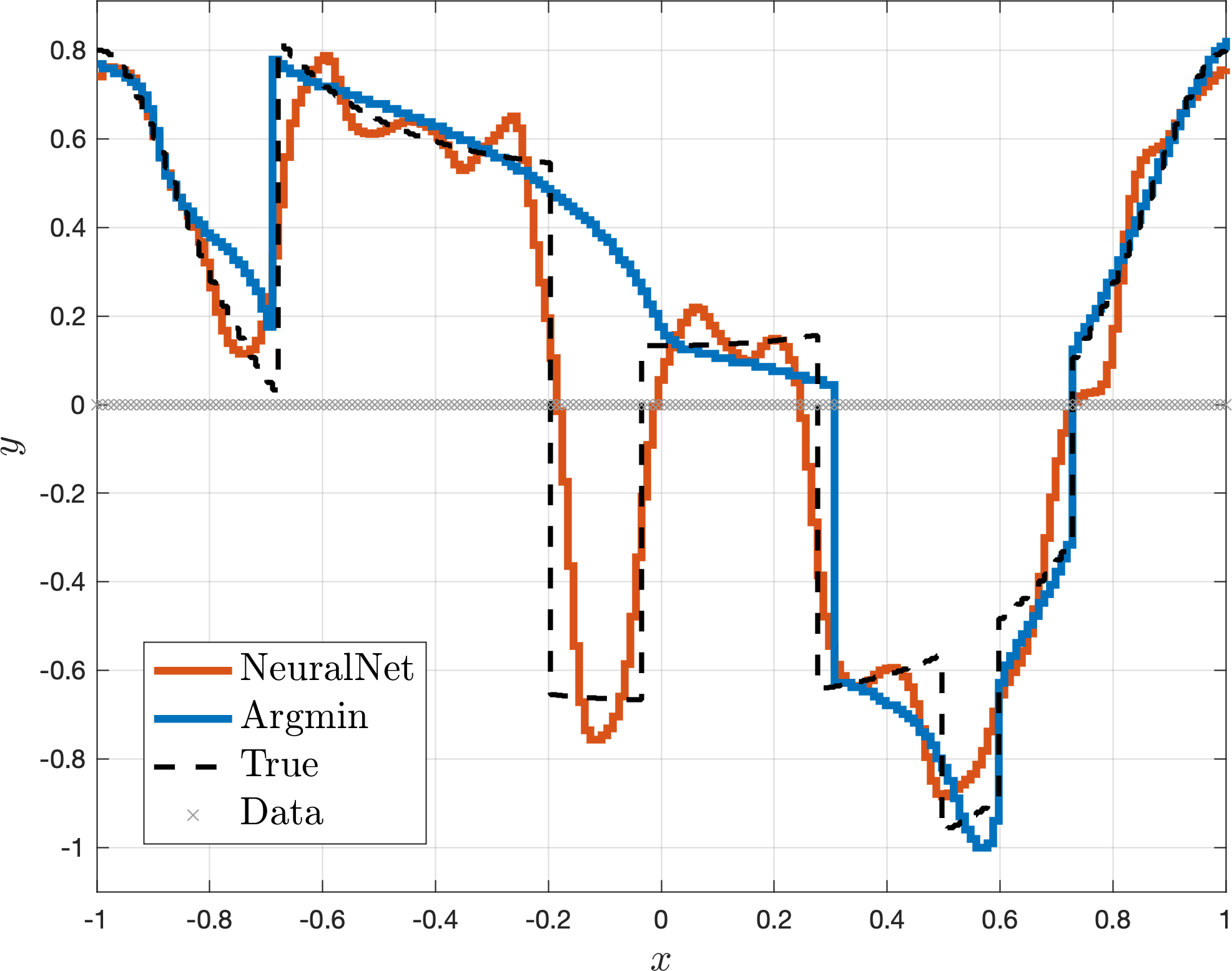}}
\put(100,170){\includegraphics[width=72mm]{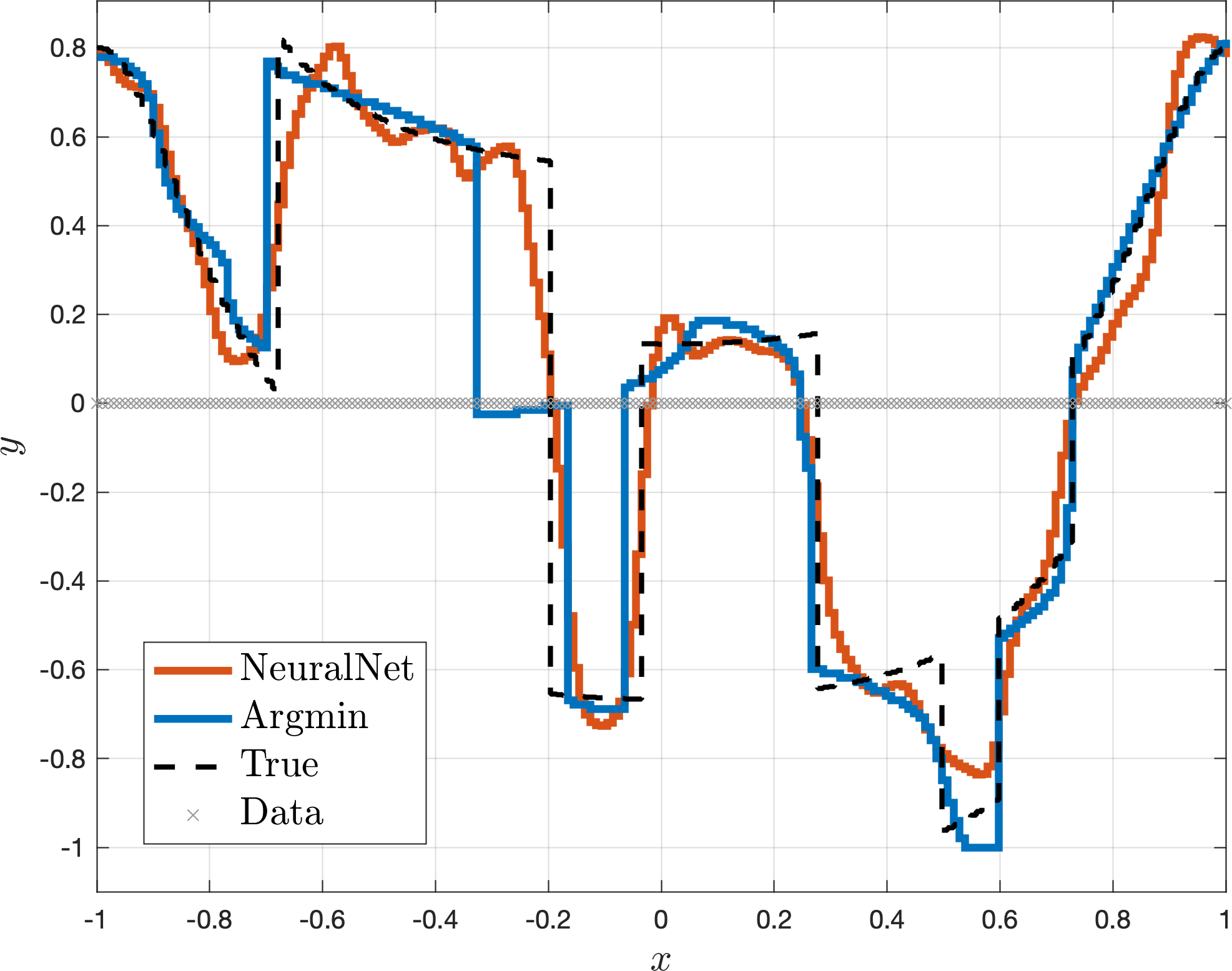}}
\put(-150,-5){\includegraphics[width=72mm]{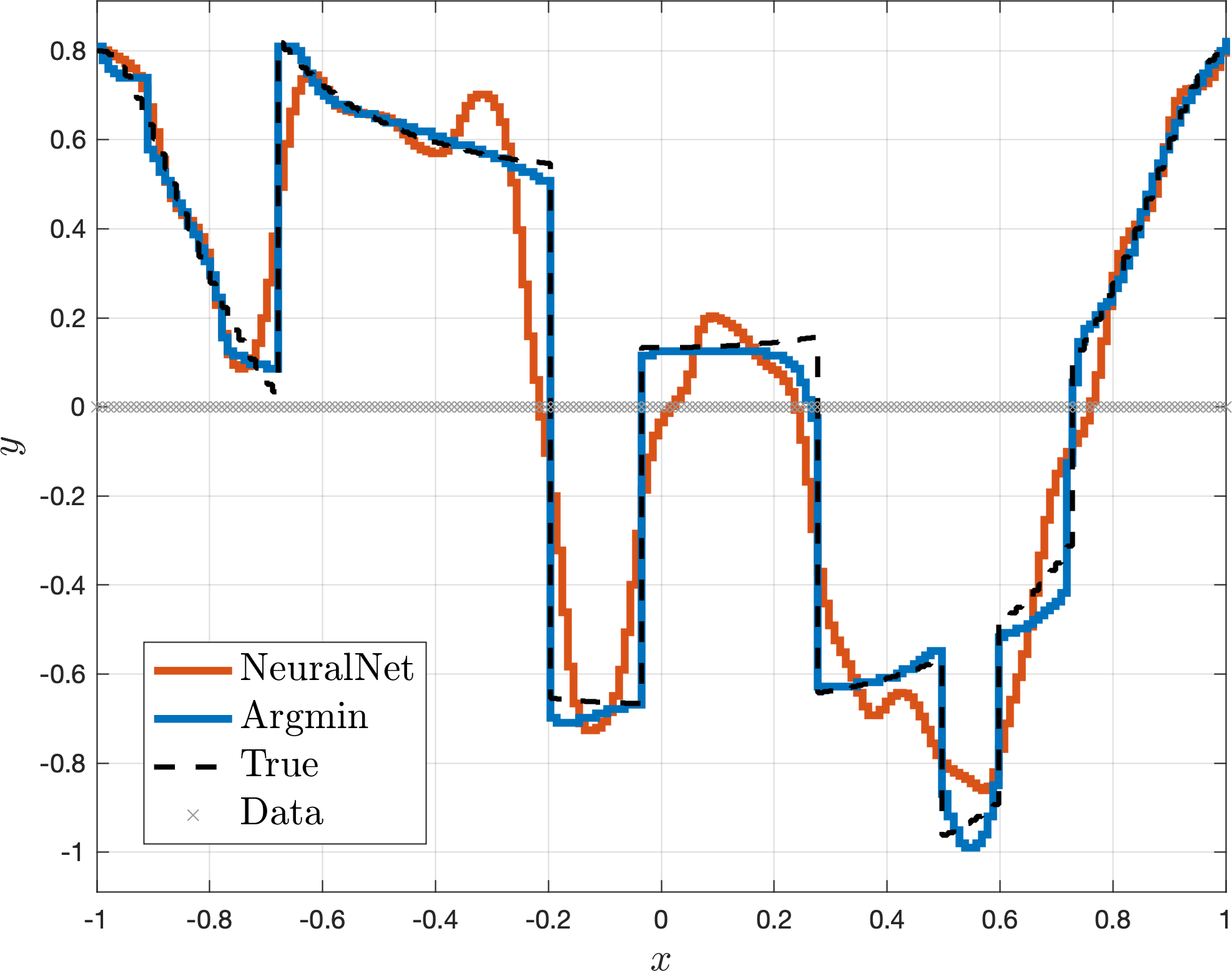}}
\put(100,-5){\includegraphics[width=72mm]{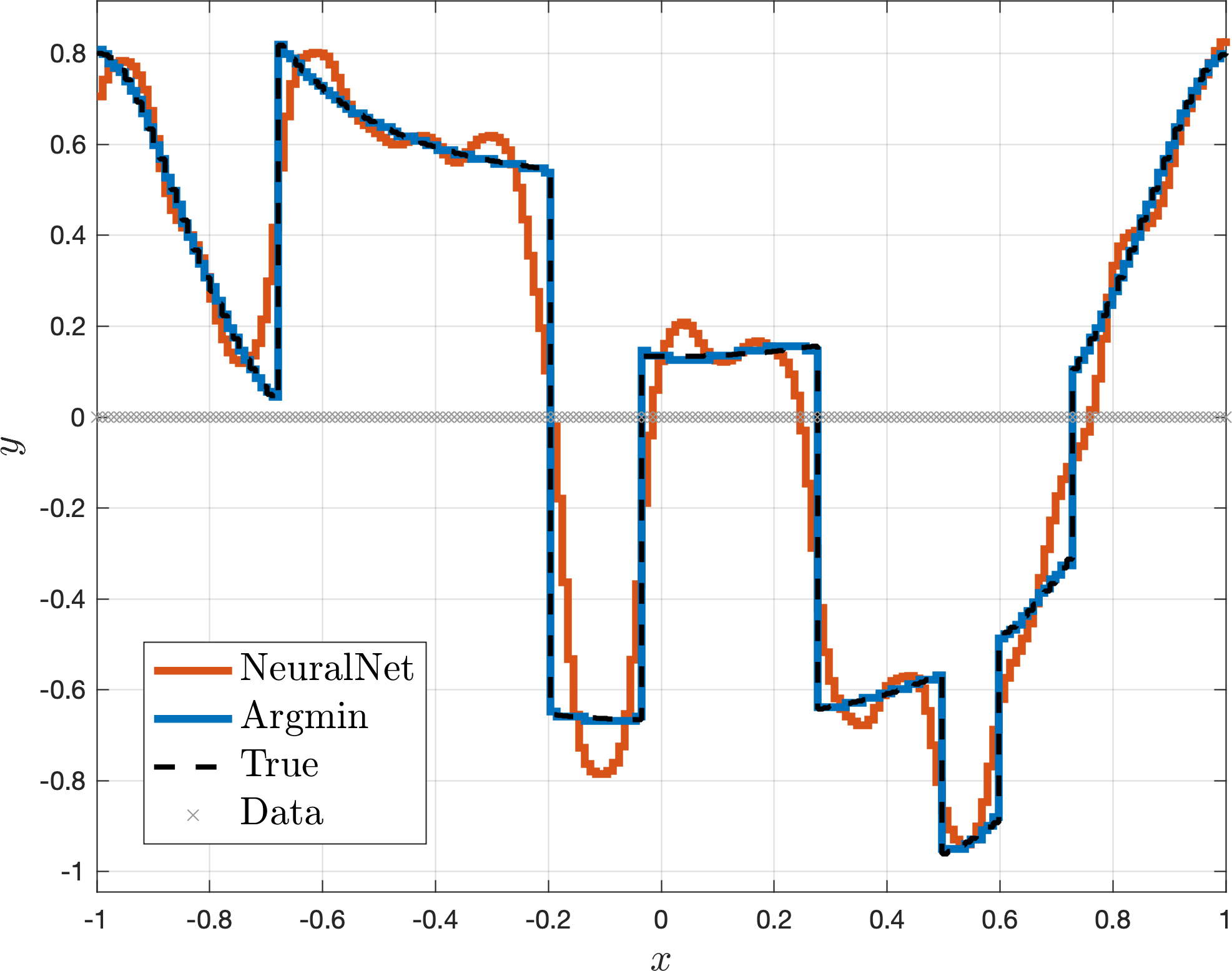}}

\put(-125,230){\small $d_x = 4$}
\put(125,230){\small $d_x = 5$}
\put(-125,55){\small $d_x = 6$}
\put(125,55){\small $d_x = 7$}

\end{picture}
\caption{\small  Polynomial argmin approximation of a function with 7 discontinuities for different values of the degree of $h_k$, $k=1,\ldots,6$ in comparison with a neural network. }
\label{fig:argmin7discont_NN}
\end{figure*}

 \subsection{Univariate: Challenging continuous functions}
 For completeness we briefly report results for approximation of continuous functions. We do so on two functions. The first one is $f(x) = \sqrt{|\sin\,x|}$ which is a transcendental function with H\"older exponent 1/2 whose derivative grows unbounded near the origin. The second one is the Runge function $f(x) = (1 + 25x^2)^{-1}$ which is a smooth function that exhibits the  Runge phenomenon (oscillations near the boundary) when approximated by polynomials through interpolation. The results are depicted in Figure~\ref{fig:continuous}. As in the previous examples, we observe an accurate fit and no oscillations with low degrees of $p$ in $x$ and $y$.
 
 \begin{figure*}[ht]
\begin{picture}(140,170)

\put(-150,-5){\includegraphics[width=72mm]{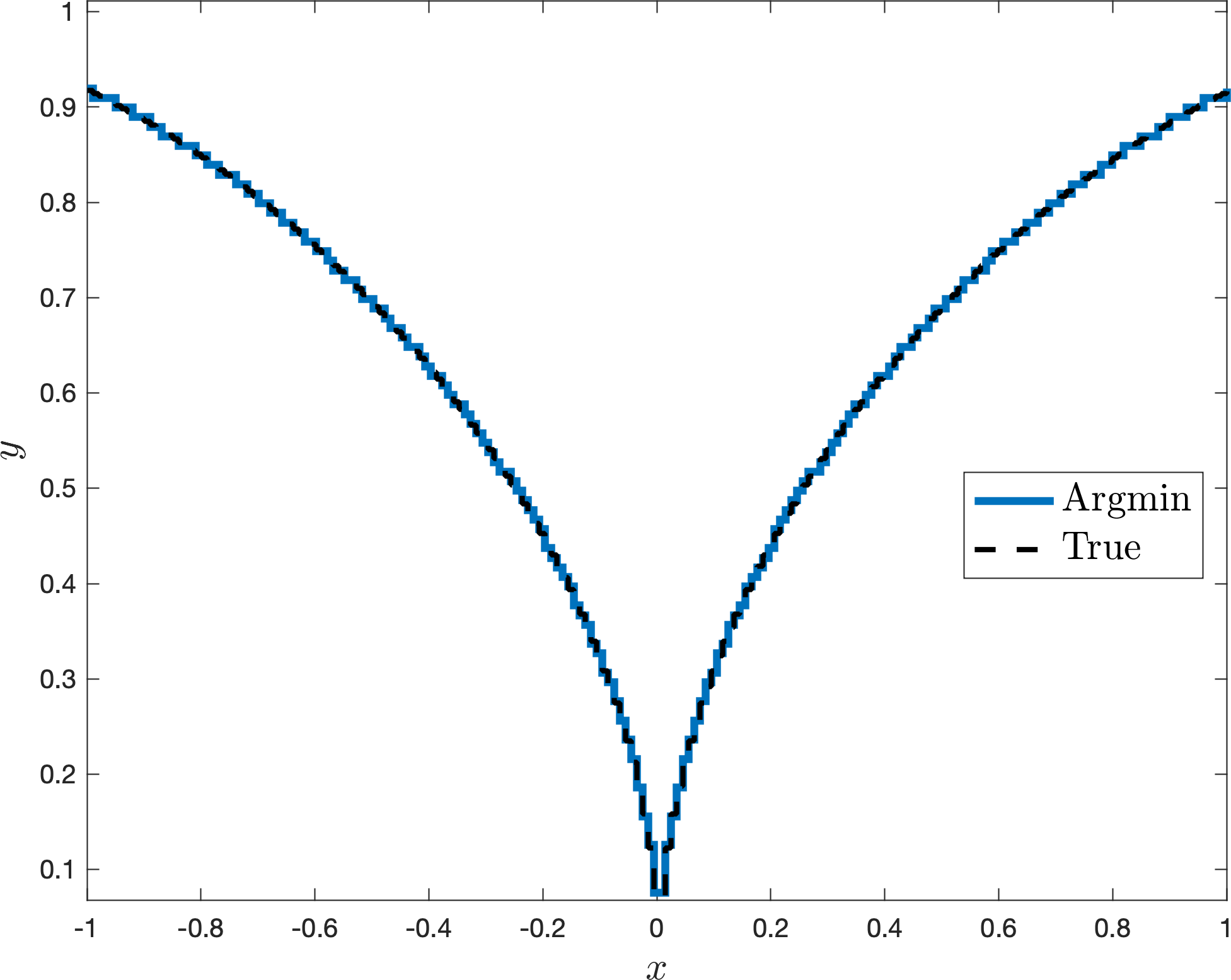}}
\put(100,-5){\includegraphics[width=72mm]{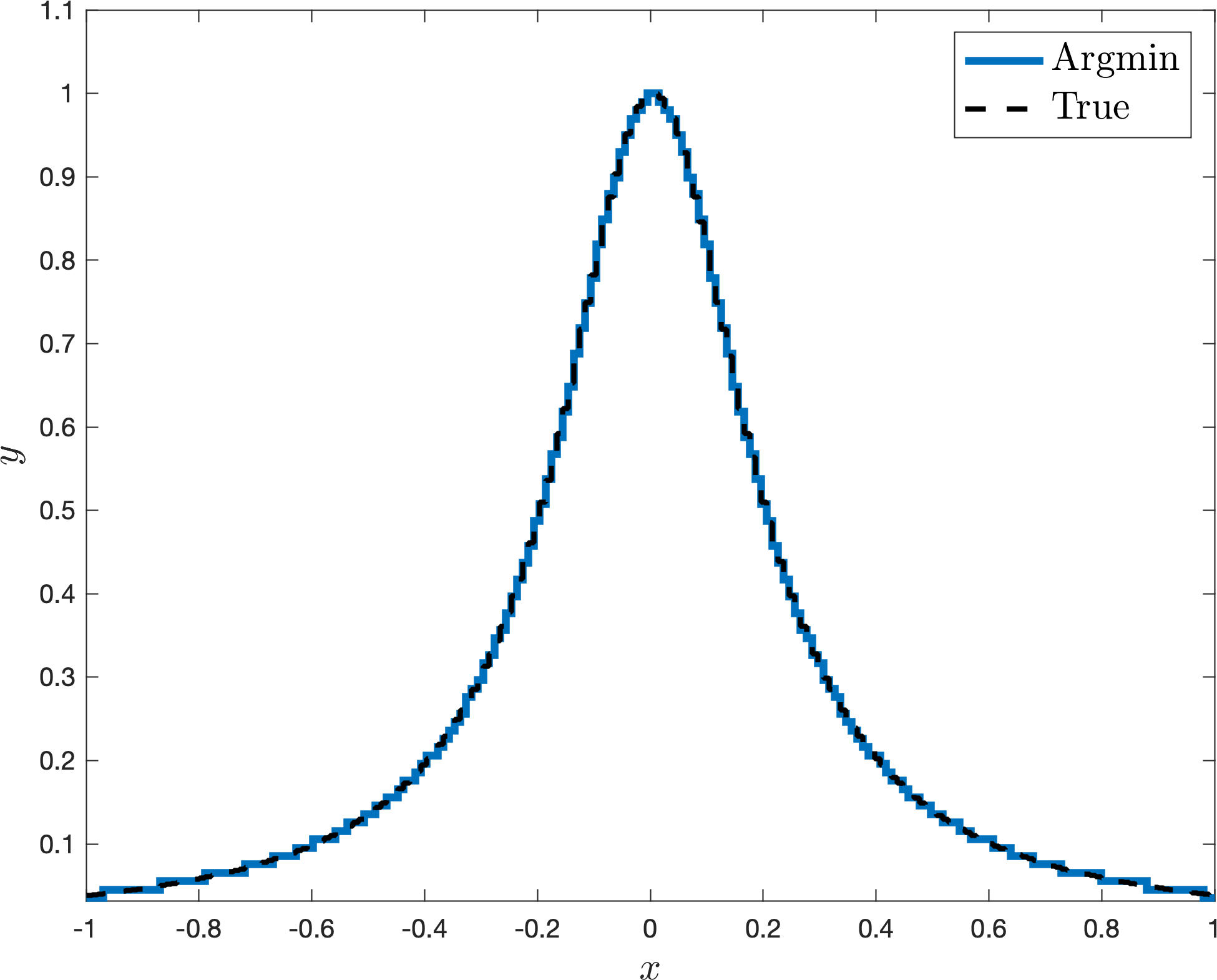}}

\put(-60,140){ $\sqrt{|\sin x|}$}
\put(186,25){\footnotesize $(1 + 25x^2)^{-1}$}
\end{picture}
\caption{\small  Polynomial argmin approximations of two continuous functions with $d_x = 4$, $d_y = 4$. Left: transcendental function with unbounded derivatives near the origin. Right: Runge function.}
\label{fig:continuous}
\end{figure*}

\subsection{Bivariate: Approximation of discontinuous functions} ~

{Consider the discontinuous function
\[
f(\bx) = \left\{\begin{array}{lll}
\frac{1+x_1+x_2}{2} & \mathrm{if} & x^2_1+x^2_2 \leq \frac{1}{4} \\
0 & \mathrm{otherwise}
\end{array}\right.
\]
constructed by multiplying an affine function with the
indicator function\footnote{The indicator function of a set is equal to one on the set and zero outside.} of a bivariate disk.
On Figure \ref{fig:lindiskchebfun2} we represent the {\tt chebfun2} approximation obtained with the {\tt chebfun} package \cite{chebfun}:\\
{\tt [x1,x2]=meshgrid(linspace(-1,1,100));}\\
{\tt plot(chebfun2(double(x1.^2+x2.^2<=1/4).*(x1+x2+1)/2));}\\
We observe that the approximation is corrupted by the typical Gibbs phenomenon encountered when approximating a discontinuous function with polynomials \cite[Chapter 9]{atap}, namely large oscillations near the discontinuity set.
\begin{figure}[ht]
	\centering
	\includegraphics[width=0.6\textwidth]{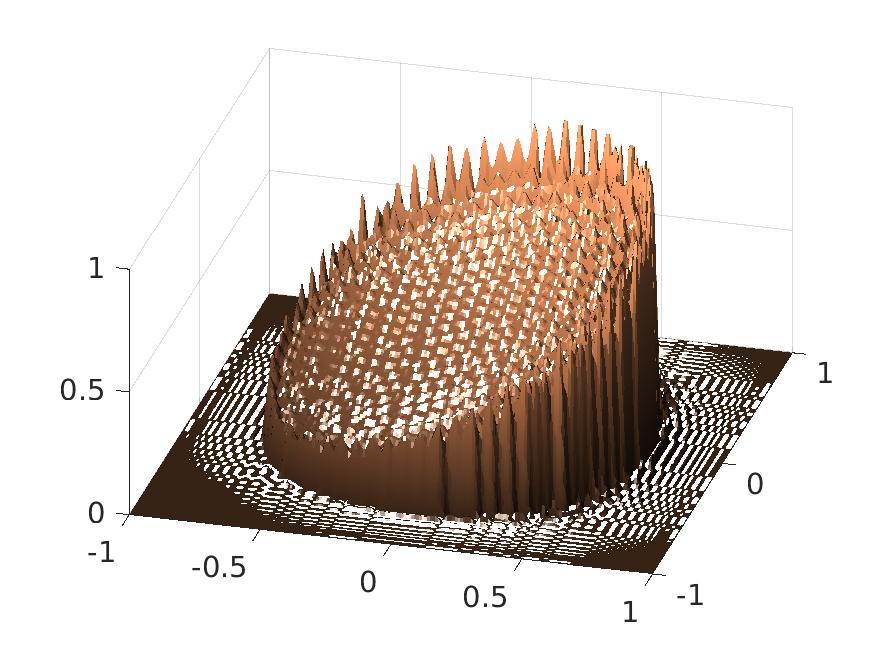}
	\caption{Chebyshev polynomial approximation of a discontinuous bivariate function obtained by {\tt chebfun2}.
		\label{fig:lindiskchebfun2}}
\end{figure}
\begin{figure*}[ht]
	\centering
	\includegraphics[width=0.48\textwidth]{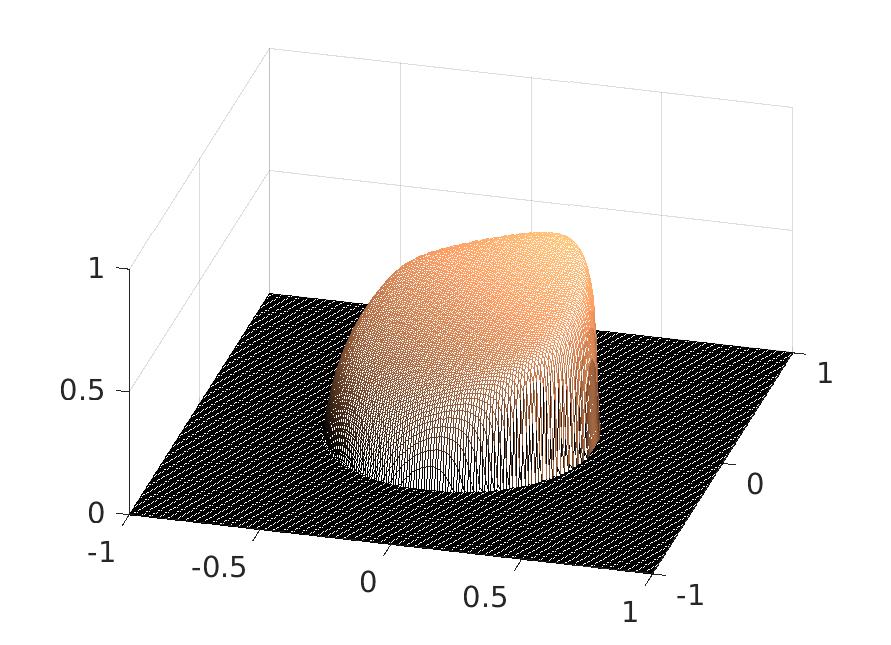}
    \includegraphics[width=0.48\textwidth]{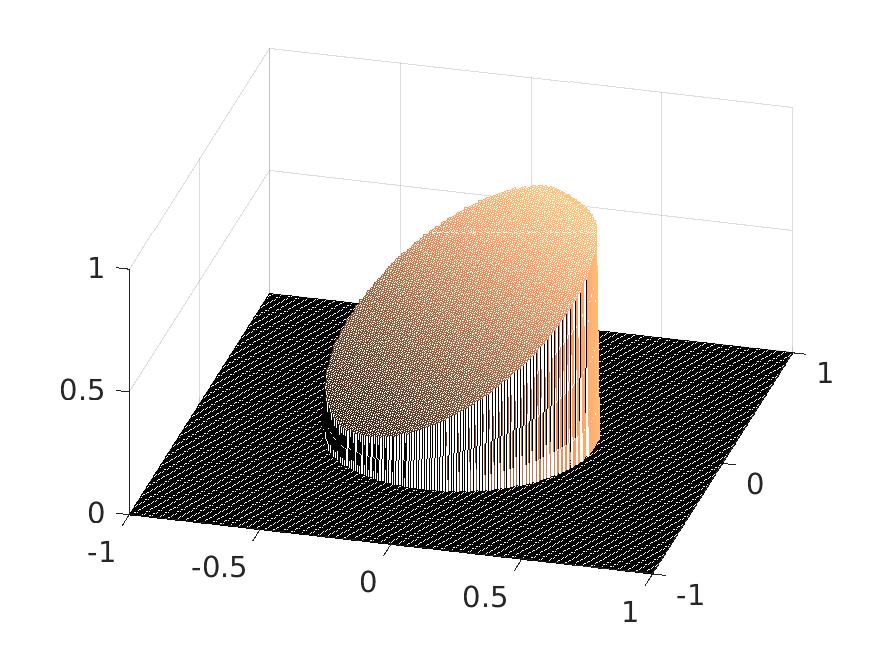}	
	\caption{Polynomial argmin approximation of a discontinous function for $d_x=3, d_y=2$ (left) and $d_x=3, d_y=4$ (right). \label{fig:lindisk}}
\end{figure*}
}

\section{Discussion and conclusion}
We have presented a simple method based on the argmin of a polynomial for approximation of discontinuous functions. The approach is model-free and mesh-free {in the sense that it
does not require prior knowledge about the function being approximated as it works only with samples of its values. 
It is grounded in powerful tools from \emph{univariate} sum of squares optimization,  
hence based only on a very specific class of convex semidefinite programming, and so it is simple to use.
It shows a great promise in numerical examples and we believe that it can become a valuable tool in data analysis. We have also proved that \emph{exact} recovery is possible on certain examples of discontinuous functions and have provided theoretical analysis of in-sample and out-of-sample error in a probabilistic setting. 
In the argmin approach \cite{constructive-1} based on the Christoffel-Darboux polynomial, such an exact recovery
is not possible in general as an $\varepsilon$-regularization term is introduced to guarantee that the associated moment matrix is non-singular. In addition, the size 
of the moment matrix to invert strongly depends on the dimension of data while
in our optimization-based appproach, the size and the number of resulting matrices to be positive semidefinite does \emph{not} depend on the dimension of the data
(their number is linear in the sample size). }


{While we  have used general purpose semidefinite solvers to construct our argmin approximants, more efficient approaches can be envisioned. Indeed our formulation boils down to optimization over the cone of univariate non-negative polynomials, a very specific class of semidefinite optimization problems. For example, non-symmetric solvers may perform faster on these problems \cite{py19}. Another option could be to bypass numerical optimization and use tailored numerical linear algebra as in \cite{f16}.
}

{
An additional interesting feature of the approximant is that its evaluation at a given point $\x\in\X$ reduces
to finding the global minimum of a univariate polynomial on an interval, which can be done efficiently e.g. by matrix eigenvalue computation. A numerically stable algorithm is described in \cite[Section 7]{battles} and implemented in the {\tt roots} function of the {\tt chebfun} package \cite{chebfun}. It is based on the application of the QR algorithm for finding the eigenvalues of a balanced companion matrix constructed by evaluating the polynomial at Chebyshev points.
}

This paper is a first step that introduces the argmin approximant
and illustrates its promising potential on non trivial numerical examples. We hope that
it could inspire some further developments. 
In particular, we have left open 
the question of optimal rates of convergence of the argmin approximant 
or more generally its worst-case performance when considering pre-defined classes of functions to approximate, 
e.g. in terms of the {\emph{manifold width} discussed in  \cite{cohen}, which is a generalization of the classical Kolmogorov width}.
Based on Section~\ref{sec:polynomials}, it is clear that the rates are at least as good as those of polynomial approximation whenever the degree of $p$ in $y$ is at least two. However, we conjecture that the rates are better for discontinuous functions. 

{As a final remark, the main goal of the paper is to introduce a new tool 
for function approximation with remarkable properties 
in the traditional noiseless setting when exact data is available.
Of course, to validate its potential and efficiency in the more general setting of statistical learning where data can be corrupted by noise (in the $\x$ and/or the value $f(\x)$),
a further detailed analysis is needed but beyond the scope of the present paper.} {We believe that relations to the max-margin support vector machine~\cite{tsochantaridis, bach_book} could facilitate this analysis.}

{
\section{Acknowledgement}
The authors would like to thank Francis Bach for pointing out the links to max-margin support vector machines. The authors would also like to acknowledge the help of Open AI's model o4-mini-high in proving Theorem~\ref{thm:piecewise_cosnt}.}

\end{document}